\definecolor{purple}{rgb}{0.5804,0.0000,0.8275}
\begin{document}
\today\\
\newcommand{\defi}{\stackrel{\Delta}{=}}
\newcommand{\qed}{\hphantom{.}\hfill $\Box$\medbreak}
\newcommand{\A}{{\cal A}}
\newcommand{\B}{{\cal B}}
\newcommand{\U}{{\cal U}}
\newcommand{\G}{{\cal G}}
\newcommand{\cZ}{{\cal Z}}
\newcommand{\proof}{\noindent{\bf Proof \ }}
\newcommand\one{\hbox{1\kern-2.4pt l }}
\newcommand{\Item}{\refstepcounter{Ictr}\item[(\theIctr)]}
\newcommand{\QQ}{\hphantom{MMMMMMM}}

\newtheorem{theorem}{Theorem}[section]
\newtheorem{proposition}{Proposition}[section]
\newtheorem{condition}{Condition}[section]
\newtheorem{lemma}{Lemma}[section]
\newtheorem{corollary}{Corollary}[section]
\newtheorem{definition}{Definition}[section]
\newtheorem{remark}{Remark}[section]
\newtheorem{assumption}{Assumption}[section]
\newtheorem{example}{Example}[section]
\newenvironment{cproof}
{\begin{proof}
 [Proof.]
 \vspace{-3.2\parsep}}
{\renewcommand{\qed}{\hfill $\Diamond$} \end{proof}}
\newcommand{\erhao}{\fontsize{21pt}{\baselineskip}\selectfont}
\newcommand{\xiaoerhao}{\fontsize{18pt}{\baselineskip}\selectfont}
\newcommand{\sanhao}{\fontsize{15.75pt}{\baselineskip}\selectfont}
\newcommand{\sihao}{\fontsize{14pt}{\baselineskip}\selectfont}
\newcommand{\xiaosihao}{\fontsize{12pt}{\baselineskip}\selectfont}
\newcommand{\wuhao}{\fontsize{10.5pt}{\baselineskip}\selectfont}
\newcommand{\xiaowuhao}{\fontsize{9pt}{\baselineskip}\selectfont}
\newcommand{\liuhao}{\fontsize{7.875pt}{\baselineskip}\selectfont}
\newcommand{\qihao}{\fontsize{5.25pt}{\baselineskip}\selectfont}

\makeatletter
\newcommand{\figcaption}{\def\@captype{figure}\caption}
\newcommand{\tabcaption}{\def\@captype{table}\caption}
\makeatother

\newcounter{Ictr}

\renewcommand{\theequation}{
\arabic{equation}}
\renewcommand{\thefootnote}{\fnsymbol{footnote}}

\def\A{\mathcal{A}}

\def\C{\mathcal{C}}
\def\F{\mathcal{F}}
\def\V{\mathcal{V}}
\def\K{\mathcal{K}}

\def\I{\mathcal{I}}

\def\Y{\mathcal{Y}}

\def\X{\mathcal{X}}

\def\J{\mathcal{J}}

\def\Q{\mathcal{O}}

\def\W{\mathcal{W}}

\def\S{\mathcal{S}}

\def\T{\mathcal{T}}

\def\L{\mathcal{L}}

\def\M{\mathcal{M}}

\def\N{\mathcal{N}}
\def\R{\mathbb{R}}
\def\H{\mathbb{H}}


\begin{center}
\topskip10mm
\LARGE{\bf Normal Cones Intersection Rule and Optimality Analysis for Low-Rank Matrix Optimization with Affine Manifolds}
\end{center}

\begin{center}
\renewcommand{\thefootnote}{\fnsymbol{footnote}}Xinrong Li$^{1}$,~Ziyan Luo$^{1}$
\footnote{
1. Department of Applied Mathematics,
Beijing Jiaotong University, Beijing 100044, P. R. China; X. Li (lixinrong0827@163.com), Z. Luo (starkeynature@hotmail.com).
}\\
{\small

}

\end{center}

\begin{abstract}
The low-rank matrix optimization with affine manifold (rank-MOA) aims to minimize a continuously differentiable function over a low-rank set intersecting with an affine manifold. This paper is devoted to the optimality analysis for rank-MOA.  As a cornerstone, the intersection rule of the Fr\'{e}chet normal cone to the feasible set of the rank-MOA is established under some mild linear independence assumptions. Aided with the resulting explicit formulae of the underlying normal cone, the so-called $F$-stationary point and the $\alpha$-stationary point of rank-MOA are investigated and the relationship with local/global minimizers are then revealed in terms of first-order optimality conditions. Furthermore, the second-order optimality analysis, including the necessary and the sufficient conditions, is proposed based on the second-order differentiation information of the model. All these results will enrich the theory of low-rank matrix optimization and give potential clues to designing efficient numerical algorithms for seeking low rank solutions. Meanwhile, two specific applications of the rank-MOA are discussed to illustrate our proposed optimality analysis.

\end{abstract}
\noindent\textbf{keywords}: Optimality conditions, low-rank set, affine manifold, normal cones, intersection rule


\vskip12pt

\numberwithin{equation}{section}

\section{Introduction}
As a reasonable and efficient characterization for dimensionality reduction and pattern recognition, the low-rankness has been witnessed and well explored for matrix data arising from a wide range of application problems. The resulting matrix optimization with embedded low rank matrix structures can be found in diverse areas such as system identification\cite{liu2009interior-point}, control\cite{SeogStructurally}, signal processing\cite{Berge1991A}, collaborative filtering\cite{gillis2011low-rank}, high-dimensional statistics\cite{Chen2015Fast,she2017robust},
finance\cite{42005Rank}, machine learning\cite{kobayashi2014low-rank,xing2002distance}, among others.

With affine manifold constraint as a prior, this paper focuses on the following low-rank matrix optimization problem
\begin{equation}\label{P}
\begin{aligned}
\min\limits_{X\in\mathbb{R}^{m\times n}}&~~f(X)\\
{\rm s.t.}&~~ \mathcal{A}(X)=b\\\nonumber
 & ~~\mathrm{rank}(X)\leq r,\nonumber
 \end{aligned}\tag{rank-MOA}
\end{equation}
where $f: \mathbb{R}^{m\times n}\rightarrow \mathbb{R}$ is a (twice) continuously differentiable function, $\mathcal{A}:\mathbb{R}^{m\times n}\rightarrow \mathbb{R}^l$ is a given linear map defined by
\begin{equation}\label{affine-def}
\mathcal{A}(X)=(\langle A^1, X \rangle, \ldots, \langle A^l, X \rangle )^{\top}
\end{equation} with $A^i\in\mathbb{R}^{m\times n}$ and $\langle A^i,X \rangle:=\sum_{k,j} A^i_{kj}X_{kj}$, $i=1,\ldots, l$, $b\in\mathbb{R}^l$  is a given vector,
${\rm rank}(X)$ denotes the rank of the matrix $X$,
and $r$ is a nonegative integer smaller than $n$, serving as the prescribed upper bound of the matrix rank. For convenience, we denote the involved affine manifold and the low-rank matrix set of \ref{P} by
  $$\mathcal{L}:=\{X\in\mathbb{R}^{m\times n}:\mathcal{A}(X)=b\},~~\mathcal{M}(r):=\{X\in\mathbb{R}^{m\times n}:\mathrm{rank}(X)\leq r\},$$
respectively, and the feasible region by $\mathcal{F}: =  \mathcal{L}\cap\mathcal{M}(r)$.

The NP-hardness, caused by the low-rank requirement in general low-rank matrix optimization \cite{fazel2002matrix}, inspires extensive study on relaxation theory and algorithms \cite{fazel2002matrix,  fazel2003, Fornasier2011Low, Mohan2012Iterative, Lai2013Improved}. Rather than the convex or nonconvex surrogates of the rank function in the aforementioned works, tackling the original rank function deduced low-rank matrix set will produce solution matrices of rank no more than any given upper bound. This is more appropriate and wanted, especially when the prescribed bound is a prior and is required as a hard constraint in specific application problems. To circumvent the nonconvexity and discontinuity of the rank function, the low-rank constraint is equivalently reformulated or transferred. We refer the interested readers to the papers \cite{burer2005local,JournLow, Gao2010STRUCTURED,delgado2016novel,zhou2018fast} and the references therein.  However, little research has been done in optimality theory for the original low-rank matrix optimization with only rank constraint, let alone the \ref{P}.

It is well-known that optimality conditions contribute a main content to optimization theory, and play a vital role in algorithm design in optimization methods. The fundamental variational tools include tangent cones and normal cones of the feasible region in the underlying optimization model. During the past few years, various notions of tangent and normal cones have been introduced to deal with the low-rank constraint directly. For instance, the Bouligand tangent cone to the low-rank set has been derived in \cite{schneider2015convergence}, the proximal and Mordukhovich normal cones to the low-rank set have been given in \cite{luke2013prox-regularity}, and the Clarke tangent cone and corresponding normal cone to the low-rank set have been presented in \cite{Li2018Social}. These explicit formulae, together with differentiation of objective functions, will then lead to optimality conditions for the corresponding low-rank matrix optimization.

When additional constraint set, namely $\Omega$, is embedded besides the low-rank set $\mathcal{M}(r)$, how to write out the explicit expression of the tangent and the normal cones of the underlying feasible region $\Omega \cap \mathcal{M}(r)$ will be essential for optimality analysis of the low-rank matrix optimization.
For $\Omega$ is spectral set\footnote{A set $\Omega$ is a spectral set if there exists a symmetric set $\mathcal{K}$  such that
$\Omega=\{X\in\mathbb{S}^n:  {\lambda}(X)\in \mathcal{K}\},$
where $\lambda$ denotes the spectral mapping.},
from \cite{lewis1996group,tam2017regularity}, one can obtain the desired normal cones though the intersection rules of the  normal cone for the problem with constraints of sparse and symmetric sets
\footnote{A set $\mathcal{K}\in\mathbb{R}^n$ is said to be symmetric if $Px\in\mathcal{K}$ for every $x\in\mathcal{K}$ and every $P\in\mathbb{P}^n$, where $\mathbb{P}^n$ denote the set of all $n\times n$ permutation matrices (Those matrices that have only one nonzero entry in every row and column, which is 1)} under the so-called R-LICQ in \cite[Corollary 2.9]{Pan2017Optimality}. Here, R-LICQ can be automatically satisfied for some typical choices for spectral sets.
For instance, Cason et al.\cite{Cason2013Iterative} discussed the Bouligand tangent cone and the corresponding normal cone when $\Omega$ is a unit sphere; Tam \cite{tam2017regularity} studied the Mordukhovich normal cone for the case $\Omega$ is the positive semi-definite cone, and Li et al. \cite{Li2019Social} considered the Fr\'{e}chet normal cones when $\Omega$ are closed unit Frobenius ball, the symmetric box or the spectrahedron.
However, for the case of affine manifold as considered in \ref{P},  the spectral structure of feasible set is destroyed. Therefore, the desired tangent cones and normal cones may not be accessible by intersection rules, unless some problem-tailored constraint qualifications (CQ) are proposed. There do exist very weak constraint qualifications to establish optimality conditions for constrained optimization, such as Guignards and Abadies contraint qualifications \cite{guignard1969generalized,Abadie1967}, but none of them are easy to verify since they can not bypass computing the tangent or normal cone of the constraint region. The challenge then turns to finding verifiable CQs that are applicable to \ref{P}.

It is also noteworthy that, in recent years, Riemannian manifold  optimization has been proved to be an effective approach to handle low-rank matrix optimization problems by applying tools for the fixed-rank matrix manifold. Manifold optimization theory and methods for such a type of low-rank matrix optimization then emerge \cite{Vandereycken2013,schneider2015convergence,2020On,Levin2021FindingSP}. However, these results can not be applied directly to \ref{P}, since the intrinsic fixed-rank matrix manifold is just a proper subset of the low-rank matrix set.

The aim of this paper is to study optimality conditions tailored for the \ref{P}. Main efforts will be focused on establishing the intersection rule of the Fr{\'e}chet normal cone under linear independence assumptions. 
The key idea is to construct an appropriate subset of the low-rank matrix set, with some easily tractable separability structure. Two types of stationary points for the \ref{P} are defined via the low-rank matrix projection and the Fr{\'e}chet normal cone.  First-order and the second-order optimality conditions in terms of these stationary points for the \ref{P} are then established under the aforementioned linear independence assumptions. For illustration purpose, we show how to apply our results to the problems of Hankel matrix approximation and low-rank representation on linear manifold. Notably, the exploration of optimality conditions for the \ref{P} not only makes up for the lack of optimality theory in structural low-rank optimization problems, but also enables many  optimization  algorithms to be applied to low-rank matrix optimization problems over sets of matrices which have to satisfy addition constraint. Currently, the main results in this paper do not cover the cases of  nonlinear equality and inequality constraints but some of the observations obtained alongside still provide ideas into these problems.

This paper is organized as follows. In Section 2, we review some related concepts and properties for normal cones, tangent cones and projections. In Section 3, we give the intersection rule of Fr\'{e}chet normal cone for the feasible set. In Section 4, we define two kinds of stationary points and investigate the first-order and second-order optimality conditions for the \ref{P}. In Section 5, we discuss some important applications of \ref{P} to illustrate our proposed optimality theory. Conclusions are made in Section 6.

{\sl Notation.}  Let $\mathbb{R}^{m\times n}$ be Euclidean space of the real $m \times n$ matrices equipped with the inner product $\langle X,Y\rangle=\sum_{i,j} X_{ij}Y_{ij}$ and the induced Frobenius norm $\|X\|_F$.
Denote by $\|X\|_2:=\sigma_1(X)$  the spectral norm of $X$ and $\sigma_1(X)$ is the largest singular value of $X$.
For any $X\in\mathbb{R}^{m\times n}$, we denote by $X_{ij}$ the $(i,j)$-th entry of $X$.  Let $J\subseteq\{1,\ldots, n\}$ be an index set. $|J|$ is the cardinality of $J$.
We use $X_J$ to denote the sub-matrix of $X$ that contains all columns indexed by $J$.
$\mathcal{O}^p$ is the set of all $p\times p$ orthogonal matrices, i.e., $\mathcal{O}^p=\{A\in\mathbb{R}^{p\times p}~|~A ^\top  A=AA ^\top =I_p\},$  where $I_p$  denotes the $p\times p$ identity matrix.
$O$ denotes the matrix with all components zero.
Let $\mathbb{R}^n$ be Euclidean space. For a vector $x\in\mathbb{R}^n$, let $\mathrm{Diag}(x)$ be an $n\times n$ diagonal matrix with diagonal entries $x_i$.

\section{Preliminaries}
This section presents several related concepts and properties regarded to normal cones, tangent cones and projections. Some of these properties are well known, some are less so, and all are basic. Most of them followed from the classical monograph \cite{Rockafellar2013Variational}.
\subsection{Normal cones and tangent cones}
A set $\mathcal{K}$ is called a cone, if $\gamma\mathcal{K}\subseteq \mathcal{K}$ holds for all $\gamma\geq 0$. The polar of the cone $\mathcal{K}$ is, denoted as $\mathcal{K}^\circ$, is defined by $\mathcal{K}^\circ=\{Y| \langle Y, X\rangle\leq 0,~ \forall X\in\mathcal{K}\}$.
If $\mathcal{K}_1$ and $\mathcal{K}_2$ are nonempty cones in $\mathbb{R}^{m\times n}$, we have
\begin{equation}\label{01}
(\mathcal{K}_1\cup \mathcal{K}_2)^\circ=(\mathcal{K}_1+\mathcal{K}_2)^\circ=\mathcal{K}_1^\circ\cap \mathcal{K}_2^\circ.
\end{equation}
Furthermore, if $\mathcal{K}_1$ and $\mathcal{K}_2$ are closed convex cones, then
\begin{equation}\label{02}
(\mathcal{K}_1\cap \mathcal{K}_2)^\circ=\mathcal{K}_1^\circ+ \mathcal{K}_2^\circ.
\end{equation}
For any given nonempty, closed set $\Omega \subseteq \mathbb{R}^{m\times n}$, and any $X\in \Omega$, the Bouligand tangent cone and its polar (also called the Fr\'{e}chet normal cone) to $\Omega$ at $X$, termed as $\mathrm{T}_\Omega^B(X)$ and $\mathrm{N}_\Omega^F(X)$, are defined by
\begin{eqnarray*}
\mathrm{T}_\Omega^B(X):&=&\left \{\Xi\in\mathbb{R}^{m\times n}: \begin{array}{l}\exists\{X^k\}\subseteq\Omega~\text{with}~X^k\rightarrow X;~\exists\{t_k\}~\text{with}\\  t_k\rightarrow0,~\mathrm{s.t.}~t_k^{-1}(X^k-X)\rightarrow\Xi,~\forall k\in\mathbb{N} \end{array}\right \},\\
\mathrm{N}_\Omega^F(X):&=&[\mathrm{T}_\Omega^B(X)]^\circ.
\end{eqnarray*}
Additionally, the Mordukhovich normal cone to $\Omega$ at $X$, termed as $\mathrm{N}_\Omega^M(X)$, is defined by
$$ \mathrm{N}_\Omega^M(X):=\limsup_{X'\xrightarrow{\Omega}X}\mathrm{N}_{\Omega}^F(X'),$$
where $X'\xrightarrow{\Omega}X$ means that $X'\in \Omega$ and $X'\rightarrow X$. It is seen that $\mathrm{N}_{\Omega}^F(X)\subseteq \mathrm{N}_\Omega^M(X)$.

Recall that $\Omega\subseteq \mathbb{R}^{m\times n}$ is locally closed in $\mathbb{R}^{m\times n}$ at $X$ if there exists a closed neighborhood $\mathcal{V}$ of $X$ such that $\Omega\cap \mathcal{V}$ is closed in $\mathbb{R}^{m\times n}$.

\begin{definition}(See \cite[Definition 6.4]{Rockafellar2013Variational})
The set~$\Omega$~being locally closed at $X\in\Omega$ and satisfying $\mathrm{N}_{\Omega}^F(X)=\mathrm{N}_\Omega^M(X)$ is called regular at $X$ in the sense of Clarke.
\end{definition}

The primary motivation for introducing regularity notions is to obtain equalities in calculus rules involving various constructs in nonsmooth analysis. Particularly, the smooth manifolds are Clarke regular, and the general tangent and normal cones defined above reduce to the tangent and normal spaces
\begin{eqnarray*}
\mathrm{T}_\Omega(X):&=&\mathrm{T}_\Omega^B(X),\\
\mathrm{N}_\Omega(X):&=&\mathrm{N}_{\Omega}^F(X)=\mathrm{N}_\Omega^M(X).
\end{eqnarray*}

Related results on tangent and normal cones are displayed for the sequent analysis.

{\tt Affine manifold.} The tangent space to $\mathcal{L}$ at $X\in\mathcal{L}$ is given by
$$\mathrm{T}_\mathcal{L}(X)=\text{ker} \mathcal{A}:=\{\Xi\in\mathbb{R}^{m\times n}: \langle A^i, \Xi\rangle =0, i=1,\ldots, l\}.$$
The normal space to $\mathcal{L}$ at $X$ is then its orthogonal complement, namely
$$\mathrm{N}_\mathcal{L}(X)=\text{range}\mathcal{A}^*:=\left\{\sum_{i=1}^l y_iA^i: y\in\mathbb{R}^l\right\}.$$

{\tt Fixed-rank manifold.} Denote the set of matrices of rank $s$ in $\mathbb{R}^{m\times n}$ by $$\mathcal{M}^{s}:=\{X\in\mathbb{R}^{m\times n}: \mathrm{rank}(X)=s\}.$$
It is well known that $\mathcal{M}^{s}$ is a smooth manifold (see \cite{Helmke1995Critical}), which is called the fixed-rank manifold.

For any given matrix $X \in \mathcal{M}^{s}$, with its singular value decomposition (SVD)
\begin{equation}\label{SVD}
X =U \Sigma{V} ^\top = [U_{\Gamma}~ U_{\Gamma_m^\perp} ]
     \left[
      \begin{array}{cc}
       {\Sigma}_{\Gamma }& O \\
       O & O \\
     \end{array}
   \right][ V_{\Gamma } ~ {V}_{\Gamma_n^\perp} ] ^\top,
 \end{equation}
where $U\in\mathcal{O}^m$ and $V\in\mathcal{O}^n$, $\Gamma\subseteq \{1, \ldots, \min(m,n)\}$ is the index set for nonzero singular values with $|\Gamma|=s$, $\Sigma_{\Gamma}\in \mathbb{R}^{s\times s}$ is the submatrix of the diagonal matrix $\Sigma$ indexed by $\Gamma$, $U_{\Gamma_m^\perp}$ and ${V}_{\Gamma_n^\perp}$ are the orthogonal complements of $U_{\Gamma}$ and $V_{\Gamma}$, with
 $\Gamma_m^\bot = \{1,\ldots, m\}\setminus \Gamma$, and $\Gamma_n^\bot = \{1,\ldots, n\}\setminus \Gamma$.
The corresponding tangent and normal cones (spaces) have the following explicit formulae,
\begin{eqnarray*}
\mathrm{T}_{\mathcal{M}^{s}}(X )&=&\left\{H\in\mathbb{R}^{m\times n}: U_{\Gamma_m^\perp}^\top HV_{\Gamma_n^\perp}=O \right\},\\
\mathrm{N}_{\mathcal{M}^{s}}(X )&=&\left\{{U}_{\Gamma_m^\perp}D{V}_{\Gamma_n^\perp}^\top\in \mathbb{R}^{m\times n}: D\in\mathbb{R}^{(m-s)\times (n-s)}\right\}.
\end{eqnarray*}
A program on $\mathcal{M}^s$ can be viewed as a Riemannian optimization on $\mathbb{R}^{m\times n}$ with the Riemannian metric defined by
$g_X(A,B)=\langle A, B \rangle$, where $X\in\mathcal{M}^s$ and $A, B\in\mathrm{T}_{\mathcal{M}^s}(X)$.

{\tt Low-rank set.}
With the aid of the above expressions of tangent and normal spaces to the rank-fixed matrix set $\mathcal{M}^s$, the explicit formulae for the Bouligand tangent cone and the Fr\'{e}chet normal cone, and the Mordukhovich normal cone to the low-rank matrix set $\mathcal{M}(r)$, have been characterized in \cite[Theorem 3.2]{schneider2015convergence} and \cite[Proposition 3.6]{luke2013prox-regularity}. The results are summarized as below.
\begin{lemma}\label{NBF}
For any $X \in\mathcal{M}(r)$ of rank $s$, we have
 \begin{eqnarray*}
\mathrm{T}^B_{\mathcal{M}(r)}(X )&=&\mathrm{T}_{\mathcal{M}^{s}}(X )+\{H\in \mathrm{N}_{\mathcal{M}^{s}}(X ): \mathrm{rank}(H)\leq r-s\},\\
\mathrm{N}^F_{\mathcal{M}(r)}(X )&=&\begin{cases}
\mathrm{N}_{\mathcal{M}^{s}}(X ),&s=r,\\
\{O\},&s<r.
\end{cases}\\
\mathrm{N}^M_{\mathcal{M}(r)}(X)&=&\{W\in \mathrm{N}_{\mathcal{M}^{s}}(X): \mathrm{rank}(W)\leq \min(m,n)-r\}.
\end{eqnarray*}
\end{lemma}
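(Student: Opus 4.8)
The plan is to read the three formulae off the SVD-adapted coordinates in~\eqref{SVD}, using three standard facts: $\mathcal{M}^s$ is a smooth embedded submanifold of $\mathbb{R}^{m\times n}$ (so every tangent vector is the velocity of a $C^1$ curve lying in $\mathcal{M}^s$), $\mathrm{rank}(\cdot)$ is lower semicontinuous, and $\{Z\in\mathbb{R}^{m\times n}:\mathrm{rank}(Z)\le k\}$ is closed for every $k$.

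For the Bouligand tangent cone, the inclusion ``$\supseteq$'' is by direct construction. Given $\Xi=\Xi_1+\Xi_2$ with $\Xi_1\in\mathrm{T}_{\mathcal{M}^s}(X)$ and $\Xi_2\in\mathrm{N}_{\mathcal{M}^s}(X)$, $\mathrm{rank}(\Xi_2)\le r-s$, choose a $C^1$ curve $c$ in $\mathcal{M}^s$ with $c(0)=X$ and $c'(0)=\Xi_1$, take $t_k\downarrow 0$, and set $X^k:=c(t_k)+t_k\Xi_2$; subadditivity of rank gives $\mathrm{rank}(X^k)\le s+(r-s)=r$, so $X^k\in\mathcal{M}(r)$, while $X^k\to X$ and $t_k^{-1}(X^k-X)\to\Xi_1+\Xi_2$. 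For ``$\subseteq$'', take $X^k\in\mathcal{M}(r)$ and $t_k\downarrow 0$ with $t_k^{-1}(X^k-X)\to\Xi$, and split $\Xi$ orthogonally along $\mathrm{T}_{\mathcal{M}^s}(X)\oplus\mathrm{N}_{\mathcal{M}^s}(X)$. The tangential component is harmless, so it suffices to show $\mathrm{rank}\big(U_{\Gamma_m^\perp}^\top\Xi V_{\Gamma_n^\perp}\big)\le r-s$. Conjugating $X^k$ by $U,V$, write $A^k:=U_\Gamma^\top X^k V_\Gamma$, $B^k:=U_\Gamma^\top X^k V_{\Gamma_n^\perp}$, $C^k:=U_{\Gamma_m^\perp}^\top X^k V_\Gamma$, $E^k:=U_{\Gamma_m^\perp}^\top X^k V_{\Gamma_n^\perp}$. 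Then $A^k\to\Sigma_\Gamma$ is invertible for large $k$, $B^k,C^k\to O$, and the Schur complement identity gives $\mathrm{rank}(X^k)=s+\mathrm{rank}\big(E^k-C^k(A^k)^{-1}B^k\big)$, whence $\mathrm{rank}\big(E^k-C^k(A^k)^{-1}B^k\big)\le r-s$. Since $t_k^{-1}B^k$, $t_k^{-1}C^k$ and $(A^k)^{-1}$ remain bounded while $B^k,C^k\to O$, one has $t_k^{-1}C^k(A^k)^{-1}B^k\to O$, hence $t_k^{-1}\big(E^k-C^k(A^k)^{-1}B^k\big)\to U_{\Gamma_m^\perp}^\top\Xi V_{\Gamma_n^\perp}$, and closedness of the rank-$\le(r-s)$ set yields the claim.

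The Fréchet normal cone follows by taking polars. With $S:=\{H\in\mathrm{N}_{\mathcal{M}^s}(X):\mathrm{rank}(H)\le r-s\}$, the identity $(\mathcal{K}_1+\mathcal{K}_2)^\circ=\mathcal{K}_1^\circ\cap\mathcal{K}_2^\circ$ from~\eqref{01}, and the fact that the polar of the subspace $\mathrm{T}_{\mathcal{M}^s}(X)$ is its orthogonal complement $\mathrm{N}_{\mathcal{M}^s}(X)$, we get $\mathrm{N}^F_{\mathcal{M}(r)}(X)=\mathrm{N}_{\mathcal{M}^s}(X)\cap S^\circ$. If $s=r$ then $S=\{O\}$, so $S^\circ=\mathbb{R}^{m\times n}$ and the right side is $\mathrm{N}_{\mathcal{M}^s}(X)$. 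If $s<r$ then $S$ is symmetric under negation and its linear span is all of $\mathrm{N}_{\mathcal{M}^s}(X)$ (every element of that space is a finite sum of rank-one matrices, each of rank $\le 1\le r-s$), so $S^\circ=(\mathrm{N}_{\mathcal{M}^s}(X))^\perp=\mathrm{T}_{\mathcal{M}^s}(X)$ and the intersection collapses to $\{O\}$. For the Mordukhovich cone, observe from the Fréchet formula just obtained that $\mathrm{N}^F_{\mathcal{M}(r)}(Z)$ is $\{O\}$ when $\mathrm{rank}(Z)<r$ and equals $\mathrm{N}_{\mathcal{M}^r}(Z)$ when $\mathrm{rank}(Z)=r$, the latter being characterized by the range conditions $W^\top Z=O$, $Z^\top W=O$ with the bound $\mathrm{rank}(W)\le\min(m,n)-r$ automatic. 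For ``$\subseteq$'', given $Z^k\to X$ in $\mathcal{M}(r)$ and $W^k\in\mathrm{N}^F_{\mathcal{M}(r)}(Z^k)$ with $W^k\to W$, pass to a subsequence on which $\mathrm{rank}(Z^k)$ is constant: the sub-$r$ case forces $W=O$, and in the rank-$r$ case passing to the limit in $(W^k)^\top Z^k=O$, $(Z^k)^\top W^k=O$ and $\mathrm{rank}(W^k)\le\min(m,n)-r$ puts $W$ in $\{W\in\mathrm{N}_{\mathcal{M}^s}(X):\mathrm{rank}(W)\le\min(m,n)-r\}$. For ``$\supseteq$'', given such a $W$ of rank $q$, pick orthonormal $P\in\mathbb{R}^{m\times(r-s)}$ inside $\mathrm{range}(U_{\Gamma_m^\perp})\cap\mathrm{range}(W)^\perp$ and orthonormal $Q\in\mathbb{R}^{n\times(r-s)}$ inside $\mathrm{range}(V_{\Gamma_n^\perp})\cap\mathrm{range}(W^\top)^\perp$ (possible since $q\le m-r$ and $q\le n-r$), and set $Z^k:=U_\Gamma\Sigma_\Gamma V_\Gamma^\top+\tfrac1k PQ^\top$; then $\mathrm{rank}(Z^k)=r$, $Z^k\to X$, and $W\in\mathrm{N}_{\mathcal{M}^r}(Z^k)$ for all $k$, so the constant sequence $W^k\equiv W$ witnesses $W\in\mathrm{N}^M_{\mathcal{M}(r)}(X)$.

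The crux is the nontrivial inclusion for the tangent cone: upgrading the global constraint $\mathrm{rank}(X^k)\le r$ to a rank bound on the limiting normal block $U_{\Gamma_m^\perp}^\top\Xi V_{\Gamma_n^\perp}$. The Schur-complement splitting together with the vanishing of $t_k^{-1}C^k(A^k)^{-1}B^k$ is precisely what confines the rank budget $r-s$ to the normal directions; the polarity computations and the closedness/semicontinuity of rank are then routine.
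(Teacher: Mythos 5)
Your proof is correct, but note that the paper does not actually prove this lemma: it imports it verbatim, citing \cite[Theorem 3.2]{schneider2015convergence} for the tangent cone and \cite[Proposition 3.6]{luke2013prox-regularity} for the two normal cones, so there is no in-paper argument to compare against. What you have written is a self-contained reconstruction, and each piece checks out: the Schur-complement rank splitting $\mathrm{rank}(X^k)=s+\mathrm{rank}\bigl(E^k-C^k(A^k)^{-1}B^k\bigr)$ together with the boundedness of $t_k^{-1}B^k$, $t_k^{-1}C^k$, $(A^k)^{-1}$ is exactly the mechanism (also used in the cited reference) that pushes the rank budget $r-s$ onto the normal block; the polarity computation $\mathrm{N}^F=\mathrm{N}_{\mathcal{M}^s}(X)\cap S^\circ$ with $S^\circ=(\mathrm{span}\,S)^\perp$ (legitimate since $S=-S$) cleanly yields the dichotomy $s=r$ versus $s<r$; and the two-sided limsup argument for $\mathrm{N}^M$, including the dimension count $m-s-q\ge r-s$ and $n-s-q\ge r-s$ that guarantees the perturbation directions $P,Q$ exist, is sound. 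One slip to fix: you characterize $\mathrm{N}_{\mathcal{M}^r}(Z)$ by ``$W^\top Z=O$, $Z^\top W=O$,'' but these two equations are transposes of each other and only encode orthogonality of column spaces; the second condition should be $WZ^\top=O$ (equivalently $ZW^\top=O$), which encodes orthogonality of the row spaces and is what you actually need when passing to the limit to conclude $W\in\mathrm{N}_{\mathcal{M}^s}(X)$. With that corrected, the argument is complete.
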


\subsection{Projections}
Given a nonempty and closed set $\Omega\subset\mathbb{R}^{m\times n}$,  the projection of an element $X\in\mathbb{R}^{m\times n}$ onto $\Omega$ is defined as $\Pi_\Omega(Z):=\arg\min_{Y\in \Omega}\|Y-X\|_F$ which is always nonempty, and is a singleton if $\Omega$ is convex in addition.

{\tt Projection onto low-rank set.} Given $Z\in {\mathbb{R}}^{m\times n}$ of rank $s$ with nonzero singular values $\sigma_1(Z)$, $\ldots$, $\sigma_s(Z)$, where $\sigma_i(Z)$ are the $i$-th largest singular values of $Z$. The projection of $Z$ onto $\mathcal{M}(r)$ is given by
\begin{equation*}
\Pi_{\mathcal{M}(r)}(Z)=\left\{U\mathrm{Diag}(\sigma_1(Z),\ldots,\sigma_r(Z),0,\ldots,0)V^\top  \Big| (U,V)\in \mathcal{O}^{m,n}(Z)\right\},
\end{equation*}
where $$\mathcal{O}^{m,n}(Z): = \left\{(U,V)\in \mathcal{O}^m \times \mathcal{O}^n: Z = U \left[
                                                                                                 \begin{array}{cc}
                                                                                                    \mathrm{Diag}(\sigma_1(Z),\ldots,\sigma_s(Z)) & O \\
                                                                                                   O & O \\
                                                                                                 \end{array}
                                                                                               \right]
 V^\top \right\}.$$
We remark that this projection may not be unique when $\sigma_r(Z)=\sigma_{r+1}(Z)$.

{\tt Projection onto the tangent and the normal space of $\mathcal{M}^s$.} Given $X\in \mathcal{M}^s$ with its SVD as stated in \eqref{SVD}, the projection onto $\mathrm{T}_{\mathcal{M}^s}(X)$ and $\mathrm{N}_{\mathcal{M}^s}(X)$ take the forms of
  $$\Pi_{\mathrm{T}_{\mathcal{M}^s}(X)}(Z)= \Pi_{U_{\Gamma}}Z\Pi_{V_{\Gamma}}+\Pi_{U_{\Gamma}}Z \Pi^\perp_{V_{\Gamma}}+ \Pi^\perp_{U_{\Gamma}}Z\Pi_{V_{\Gamma}}, ~~\forall Z\in\mathbb{R}^{m\times n},$$
  and
   $$\Pi_{\mathrm{N}_{\mathcal{M}^s}(X)}(Z)=  \Pi^\perp_{U_{\Gamma}}Z \Pi^\perp_{V_{\Gamma}}, ~~\forall Z\in\mathbb{R}^{m\times n},$$
  where $\Pi_{U_{\Gamma}}=U_{\Gamma}U_{\Gamma}^{\top}$ and $\Pi^\perp_{U_{\Gamma}}=U_{\Gamma_m^\perp}U_{\Gamma_m^\perp}^{\top}$.

Suppose that $f:\mathcal{M}^s\rightarrow \mathbb{R}$ is twice continuously differentiable. From \cite{Vandereycken2013}, the Riemannian gradient of $f$ at $X\in\mathcal{M}^s$
is defined as
$$\text{grad}f(X):=\Pi_{\mathrm{T}_{\mathcal{M}^s}(X)}(\nabla f(X)),$$
 and the Riemannian Hessian of $f$ at $X$ is the linear map $\text{Hess}f(X):\mathrm{T}_{\mathcal{M}^s}(X)\rightarrow\mathrm{T}_{\mathcal{M}^s}(X)$ defined as
  \begin{eqnarray*}
  \text{Hess}f(X)[\Xi]:=\Pi_{\mathrm{T}_{\mathcal{M}^s}(X)}(\nabla^2f(X )[\Xi])+\mathcal{H}(\nabla f(X)).
   \end{eqnarray*}
  Here \begin{equation}\label{H}
   \mathcal{H}(\nabla f(X)):=\Pi^\perp_{U_{\Gamma}}\nabla f(X)Q\Sigma_{\Gamma}^{-1}V_{\Gamma}^{\top}\Pi_{V_{\Gamma}}
   + \Pi_{U_{\Gamma}}U_{\Gamma}\Sigma_{\Gamma}^{-1}P^T\nabla f(X)\Pi^\perp_{V_{\Gamma}},
   \end{equation}
   with  $Q=\Pi^\perp_{V_{\Gamma}} \Xi^{\top} U_{\Gamma}$ and $P=\Pi^\perp_{U_{\Gamma}} \Xi V_{\Gamma}$. Moreover, for any $\Xi\in\mathrm{T}_{\mathcal{M}^s}(X)$, we get
   \begin{eqnarray}\label{Hess-terms}
   \text{Hess}f(X)[\Xi,\Xi]&=&\langle\Pi_{\mathrm{T}_{\mathcal{M}^s}(X)}(\nabla^2f(X )[\Xi]), \Xi\rangle\nonumber\\
    &+&
  \langle \Pi^\perp_{U_{\Gamma}}\nabla f(X )\Pi^\perp_{V_{\Gamma}}, \Xi V_{\Gamma}\Sigma_{\Gamma}^{-1}U_{\Gamma}^{\top} \Xi \rangle+ \langle \Pi^\perp_{U_{\Gamma}}\nabla f(X )\Pi^\perp_{V_{\Gamma}}, \Xi V_{\Gamma}\Sigma_{\Gamma}^{-1}U_{\Gamma}^{\top} \Xi \rangle\nonumber\\
  &=&\nabla^2f(X)[\Xi,\Xi]+2\langle\Pi_{\mathrm{N}_{\mathcal{M}^s}(X)}(\nabla f(X)),\Xi X^{\dag}\Xi \rangle,
    \end{eqnarray}
where $X^{\dag}=V_{\Gamma}\Sigma_{\Gamma}^{-1}U_{\Gamma}^{\top}$ is the Moore-Penrose inverse of $X$.  The term $\nabla^2f(X)[\Xi,\Xi]$ in \eqref{Hess-terms} contains second-order information about $f$ along the tangent space $\mathrm{T}_{\mathcal{M}^s}(X)$ but only first-order information on $\mathcal{M}^s$. The second term in \eqref{Hess-terms} involves second-order information about $\mathcal{M}^s$ but only first-order information about $f$ along the normal space(see \cite{Vandereycken2013} in detail).

\section{Normal cones intersection rule}
Calculus rules of the normal cones of sets play a crucial role in optimality conditions for a nonsmooth mathematical program. As the feasible set of \ref{P} is an intersection of the low-rank matrix set $\mathcal{M}(r)$ and the smooth manifolds $\mathcal{L}$, we will discuss the intersection rule of the normal cones to such an intersection set in this section.
 The next lemma in  \cite[Theorem 6.42]{Rockafellar2013Variational} will be used in the sequel.
\begin{lemma}[Tangents and normals to unions and intersections]\label{TNUI}
 Let $X\in \bigcup^k_{i=1}\Omega_i$ for closed sets $\Omega_i\in\mathbb{R}^{m\times n}$. It holds that
 \begin{equation}\label{03}
\mathrm{T}^B_{\bigcup^k_{i=1}\Omega_i}(X)=\bigcup^k_{i=1}\mathrm{T}^B_{\Omega_i}(X).
 \end{equation}
Let $X\in \Omega_1\cap\Omega_2$. It holds that
\begin{equation}\label{04}
\mathrm{T}^B_{\Omega_1\cap\Omega_2}(X)\subseteq \mathrm{T}^B_{\Omega_1}(X)\cap \mathrm{T}^B_{\Omega_2}(X),~~~~\mathrm{N}^F_{\Omega_1\cap\Omega_2}(X)\supseteq \mathrm{N}^F_{\Omega_1}(X)+\mathrm{N}^F_{\Omega_2}(X).
\end{equation}
Under basic qualification condition (BQ) $\mathrm{N}^M_{\Omega_1}(X)\cap(-\mathrm{N}^M_{\Omega_2}(X))=\{0\}$, we also has
\begin{equation}\label{05}
\mathrm{N}^M_{\Omega_1\cap\Omega_2}(X)\subseteq \mathrm{N}^M_{\Omega_1}(X)+\mathrm{N}^M_{\Omega_2}(X).
\end{equation}
If in addition $\Omega_1$ and $\Omega_2$ are regular at $X$, then $\Omega_1\cap \Omega_2$ is regular at $X$ and
\begin{equation}\label{06}
 \mathrm{T}^B_{\Omega_1\cap\Omega_2}(X)=\mathrm{T}^B_{\Omega_1}(X)\cap \mathrm{T}^B_{\Omega_2}(X),~~~~\mathrm{N}^F_{\Omega_1\cap\Omega_2}(X)=\mathrm{N}^F_{\Omega_1}(X)+\mathrm{N}^F_{\Omega_2}(X).
\end{equation}
\end{lemma}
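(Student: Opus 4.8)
The plan is to treat the four assertions in increasing order of difficulty; the only genuinely hard one is the Mordukhovich intersection rule \eqref{05} under (BQ), and everything else reduces to monotonicity, a pigeonhole, and the polarity identities \eqref{01}--\eqref{02}.

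For the tangent cone to a union \eqref{03}, the inclusion $\supseteq$ is immediate from monotonicity of $\mathrm{T}^B$ under set inclusion, since $\Omega_i\subseteq\bigcup_j\Omega_j$ for each $i$ (the union on the right being read over those $i$ with $X\in\Omega_i$). For $\subseteq$, take $\Xi\in\mathrm{T}^B_{\bigcup_i\Omega_i}(X)$ with witnessing sequences $X^k\to X$ in $\bigcup_i\Omega_i$ and $t_k\to 0$ with $t_k^{-1}(X^k-X)\to\Xi$; since there are finitely many sets, some index $i_0$ has $X^k\in\Omega_{i_0}$ along a subsequence, and closedness of $\Omega_{i_0}$ gives $X\in\Omega_{i_0}$, whence $\Xi\in\mathrm{T}^B_{\Omega_{i_0}}(X)$. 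For the trivial intersection inclusions \eqref{04}, $\Omega_1\cap\Omega_2\subseteq\Omega_i$ and monotonicity give $\mathrm{T}^B_{\Omega_1\cap\Omega_2}(X)\subseteq\mathrm{T}^B_{\Omega_1}(X)\cap\mathrm{T}^B_{\Omega_2}(X)$; taking polars, and using that $(\mathcal{K}_1\cap\mathcal{K}_2)^\circ\supseteq\mathcal{K}_1^\circ+\mathcal{K}_2^\circ$ for arbitrary cones (direct from the definition of the polar) together with $\mathrm{N}^F_{\Omega}(X)=[\mathrm{T}^B_{\Omega}(X)]^\circ$, yields $\mathrm{N}^F_{\Omega_1\cap\Omega_2}(X)\supseteq\mathrm{N}^F_{\Omega_1}(X)+\mathrm{N}^F_{\Omega_2}(X)$.

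The hard part is \eqref{05}, and the cleanest route is a diagonal reduction: set $D:=\Omega_1\times\Omega_2$ in $\mathbb{R}^{m\times n}\times\mathbb{R}^{m\times n}$, so that $\mathrm{N}^M_D(X,X)=\mathrm{N}^M_{\Omega_1}(X)\times\mathrm{N}^M_{\Omega_2}(X)$ and $\Omega_1\cap\Omega_2=\Phi^{-1}(D)$ for the linear embedding $\Phi(Z)=(Z,Z)$, whose adjoint is $(W_1,W_2)\mapsto W_1+W_2$; then \eqref{05} is precisely the change-of-variables (pre-image) rule for Mordukhovich normal cones, whose qualification hypothesis unwinds to $\mathrm{N}^M_{\Omega_1}(X)\cap(-\mathrm{N}^M_{\Omega_2}(X))=\{0\}$, i.e. (BQ). To establish that pre-image rule in the first place I would argue directly on Fr\'echet normals: given $v\in\mathrm{N}^M_{\Omega_1\cap\Omega_2}(X)$, choose $x^k\to X$ in $\Omega_1\cap\Omega_2$ and $v^k\to v$ with $v^k\in\mathrm{N}^F_{\Omega_1\cap\Omega_2}(x^k)$, and at each $x^k$ apply a penalization argument (Ekeland's variational principle applied to $Z\mapsto-\langle v^k,Z\rangle+\tfrac{\rho}{2}\|Z-x^k\|_F^2$ together with the constraints $Z\in\Omega_1$, $Z\in\Omega_2$, equivalently the extremal principle for the pair $(\Omega_1,\Omega_2)$) to extract nearby points $x_i^k\to X$ with Fr\'echet normals $v_i^k\in\mathrm{N}^F_{\Omega_i}(x_i^k)$ such that $v_1^k+v_2^k\to v$; local closedness of $\Omega_1,\Omega_2$ at $X$ is what makes this step legitimate. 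Finally (BQ) delivers boundedness of $\{v_i^k\}$: if, say, $\|v_1^k\|\to\infty$, then after normalization a subsequential limit gives a unit matrix $W$ with $W\in\mathrm{N}^M_{\Omega_1}(X)$ and $-W\in\mathrm{N}^M_{\Omega_2}(X)$, contradicting (BQ); with boundedness in hand, passing to subsequential limits yields $v=W_1+W_2$ with $W_i\in\mathrm{N}^M_{\Omega_i}(X)$. I expect this separation/variational step to be the sole real obstacle.

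For the regular case \eqref{06}, assume additionally that both $\Omega_i$ are regular at $X$, so $\mathrm{N}^F_{\Omega_i}(X)=\mathrm{N}^M_{\Omega_i}(X)$. Chaining \eqref{04}, the general inclusion $\mathrm{N}^F\subseteq\mathrm{N}^M$, and \eqref{05} gives $\mathrm{N}^F_{\Omega_1}(X)+\mathrm{N}^F_{\Omega_2}(X)\subseteq\mathrm{N}^F_{\Omega_1\cap\Omega_2}(X)\subseteq\mathrm{N}^M_{\Omega_1\cap\Omega_2}(X)\subseteq\mathrm{N}^M_{\Omega_1}(X)+\mathrm{N}^M_{\Omega_2}(X)=\mathrm{N}^F_{\Omega_1}(X)+\mathrm{N}^F_{\Omega_2}(X)$, so every inclusion is an equality; in particular $\mathrm{N}^F_{\Omega_1\cap\Omega_2}(X)=\mathrm{N}^M_{\Omega_1\cap\Omega_2}(X)$, and since $\Omega_1\cap\Omega_2$ is locally closed at $X$ (inherited from the $\Omega_i$) it is regular at $X$, which proves the normal-cone equality in \eqref{06}. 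The tangent-cone equality then follows by polarity: regularity makes each of $\mathrm{T}^B_{\Omega_1}(X)$, $\mathrm{T}^B_{\Omega_2}(X)$, $\mathrm{T}^B_{\Omega_1\cap\Omega_2}(X)$ a closed convex cone equal to the polar of the corresponding Fr\'echet normal cone (bipolar theorem), so $\mathrm{T}^B_{\Omega_1\cap\Omega_2}(X)=[\mathrm{N}^F_{\Omega_1}(X)+\mathrm{N}^F_{\Omega_2}(X)]^\circ=[\mathrm{N}^F_{\Omega_1}(X)]^\circ\cap[\mathrm{N}^F_{\Omega_2}(X)]^\circ=\mathrm{T}^B_{\Omega_1}(X)\cap\mathrm{T}^B_{\Omega_2}(X)$, using \eqref{01}.
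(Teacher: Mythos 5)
The paper does not prove this lemma at all: it is imported verbatim as \cite[Theorem 6.42]{Rockafellar2013Variational}, so there is no in-paper argument to compare against. Your reconstruction is, in architecture, exactly the reference's proof: the pigeonhole-plus-closedness argument for \eqref{03}, monotonicity and polarity for \eqref{04}, the diagonal embedding $\Phi(Z)=(Z,Z)$ into $\Omega_1\times\Omega_2$ combined with the pre-image rule for Mordukhovich normals for \eqref{05} (Rockafellar--Wets derive 6.42 precisely this way, via their Theorem 6.14 on constraint systems), and the chain of inclusions closed up by regularity, followed by polarity and \eqref{01}, for \eqref{06}. All of these steps are sound; in particular your observation that the chain $\mathrm{N}^F_{\Omega_1}(X)+\mathrm{N}^F_{\Omega_2}(X)\subseteq \mathrm{N}^F_{\Omega_1\cap\Omega_2}(X)\subseteq \mathrm{N}^M_{\Omega_1\cap\Omega_2}(X)\subseteq \mathrm{N}^M_{\Omega_1}(X)+\mathrm{N}^M_{\Omega_2}(X)$ collapses to equalities under regularity, and that this simultaneously yields regularity of the intersection, is the right way to get \eqref{06} without further work.

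The one place where your sketch is looser than a complete proof is the variational core of \eqref{05}, and you correctly flag it. Two remarks there. First, the penalization/extremal-principle step does not directly produce $v_1^k+v_2^k\to v$; what it produces is a normalized approximate decomposition $\lambda_k v^k - v_1^k - v_2^k\to 0$ with $\lambda_k+\|v_1^k\|+\|v_2^k\|$ bounded away from $0$ and from $\infty$, and (BQ) is needed once to exclude the degenerate case $\lambda_k\to 0$ before you can divide through --- this degeneracy and the unboundedness of $\{v_i^k\}$ that you do address are the same phenomenon, so your contradiction argument covers it, but the presentation should make clear that the multiplier $\lambda_k$ appears first. Second, the extremal principle itself is a substantial theorem (essentially equivalent in strength to what is being proved), so if this lemma were to be proved from scratch rather than cited, that is where all the real work lives; as a justification for citing \cite[Theorem 6.42]{Rockafellar2013Variational}, however, your account is faithful and complete enough.
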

Particular, if $\Omega_1$ and $\Omega_2$ are two smooth manifolds, then the BQ is equivalent to the transversality in  \cite{lewis2008alternating}. Thus, $\Omega_1\cap \Omega_2$ is a  smooth manifold at $X$ and there holds \eqref{06}.

Since the lack of regularity, the low-rank set cannot ensure the equality form in the calculus rules of the normal cones in \eqref{06} of Lemma \ref{TNUI}. Therefore, we choose the union of a finite number of subspaces as a subset of low-rank set and replace the normal cone of $\mathcal{M}(r)$ with the normal cone of a low-rank subset at rank-deficient point. To construct the low-rank subset, we first define some nontation. For $X\in \mathcal{L}\cap \mathcal{M}(r)$ with its SVD as in \eqref{SVD}, we denote $\mathcal{J}:=\{J\subseteq\{1,\ldots, \min(m,n)\}:|J|=r, \Gamma\subseteq J\}$ and introduce the following subset of the low-rank matrix set with respect to any given $X\in \mathcal{M}(r)$ together with the matrices $U$ and $V$ in \eqref{SVD}
\begin{equation}\label{SX}
\mathcal{M}_{(X,U,V)}(r):=\bigcup_{J\in\mathcal{J}}\mathcal{M}_{(X,U,V)}(J)\subseteq \mathcal{M}(r),
\end{equation}
 where
\begin{equation}\label{SJX}
\mathcal{M}_{(X,U,V)}(J):=\left\{ \begin{array}{ll}
           \left\{U_{J} B V^\top: B\in \mathbb{R}^{r\times n}\right\}, & \text{if}~m\leq n,\\
     \left\{U B V_J^\top: B\in \mathbb{R}^{m\times r}\right\},&\text{if} ~m\geq n.\\
             \end{array}
\right.
\end{equation}
is a subspace associated with $(X,U,V)$. For simplicity, we use $\mathcal{M}_X(r)$ and $\mathcal{M}_X(J)$ to briefly denote $\mathcal{M}_{(X,U,V)}(r)$ and $\mathcal{M}_{(X,U,V)}(J)$, respectively. Without loss of generality, we assume that  $m\geq n$ in the remainder of this paper. Particularly, if $J=\Gamma$, we have
$\mathcal{M}_X(r)=\mathcal{M}_X(\Gamma) = \left\{U B V_{\Gamma}^\top: B\in \mathbb{R}^{m\times r}\right\}.$
\begin{lemma}\label{NNN}
Let $X \in \mathcal{M}(r)$ be a rank $s$ matrix with its SVD as in \eqref{SVD}, and $\mathcal{M}_X(J)$ and $\mathcal{M}_X(r)$ be defined as in \eqref{SJX} and \eqref{SX}. The following statements hold.
\begin{itemize}
\item[(i)] For any $J\in \mathcal{J}$,
\begin{equation}\label{TNspace}
\mathrm{N}_{\mathcal{M}_X(J)}(X )
=\{W\in\mathbb{R}^{m\times n}:U^\top W V_J=O\}.\end{equation}
\item[(ii)]
\begin{equation}\label{TNspace1}
\mathrm{N}^F_{\mathcal{M}_X(r)}(X)=\left\{ \begin{array}{ll}
           \left\{
  W\in\mathbb{R}^{m\times n}:U^\top W V_\Gamma=O \right\}, & \text{if}~s=r\\
     \mathrm{N}^F_{\mathcal{M}(r)}(X )=\{O\},&\text{if} ~s< r.\\
             \end{array}
\right.
\end{equation}
\end{itemize}
\end{lemma}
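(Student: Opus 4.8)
The plan is to prove the two statements separately, using only the fact that $\mathcal{M}_X(J)$ and $\mathcal{M}_X(r)$ are (unions of) linear subspaces, plus the structural formulas for tangent/normal cones of subspaces and the union rule \eqref{03} together with the polarity identities \eqref{01}. Throughout I assume $m\ge n$, so $\mathcal{M}_X(J)=\{UBV_J^\top:B\in\mathbb{R}^{m\times r}\}$.

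For part (i): since $\mathcal{M}_X(J)$ is a linear subspace, it is a smooth manifold, hence Clarke regular, and its tangent cone at any point (in particular at $X$) equals the subspace itself, while its normal cone is the orthogonal complement. So I would compute $\mathrm{T}_{\mathcal{M}_X(J)}(X)=\{UBV_J^\top:B\in\mathbb{R}^{m\times r}\}$ and then identify its orthogonal complement. A matrix $W$ is orthogonal to every $UBV_J^\top$ iff $\langle W,UBV_J^\top\rangle=\langle U^\top W V_J,B\rangle=0$ for all $B\in\mathbb{R}^{m\times r}$, which holds iff $U^\top W V_J=O$. This gives \eqref{TNspace}. (One should note in passing that $X=UBV_J^\top$ for a suitable $B$, since $\Gamma\subseteq J$, so $X\in\mathcal{M}_X(J)$ and the formula is being evaluated at a genuine point of the subspace.)

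For part (ii), split into the two cases. When $s<r$: we have $X$ is rank-deficient relative to $r$, and I claim $X$ lies in the \emph{interior} of $\mathcal{M}_X(r)$ relative to... actually the cleaner route is to observe that $\mathcal{M}_X(r)\subseteq\mathcal{M}(r)$ and $X\in\mathcal{M}_X(r)$, so by monotonicity of the Fr\'echet normal cone under set inclusion, $\mathrm{N}^F_{\mathcal{M}_X(r)}(X)\subseteq\mathrm{N}^F_{\mathcal{M}(r)}(X)=\{O\}$ by Lemma \ref{NBF}; hence it equals $\{O\}$. Wait — monotonicity goes the wrong way (larger set, smaller normal cone at a shared point means $\mathrm{N}^F$ of the \emph{subset} is \emph{larger}). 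So instead I would argue directly: by \eqref{03} the tangent cone of the union is the union of the tangent cones $\mathrm{T}_{\mathcal{M}_X(J)}(X)$; when $s<r$ these are subspaces of the same dimension $mr$, and since $\Gamma\subsetneq J$ can be chosen in several ways the union of these subspaces spans enough of $\mathbb{R}^{m\times n}$ that its polar is $\{O\}$. Concretely, for any nonzero $W$ one can exhibit some $J\in\mathcal{J}$ and some element of $\mathcal{M}_X(J)$ not orthogonal to $W$; the freedom in choosing the $r-s$ extra indices of $J$ among $\Gamma_n^\perp$ is what makes this work, and this is the step I expect to be the main obstacle — it requires checking that $\bigcup_{J\in\mathcal{J}}\{UBV_J^\top\}$ has trivial polar, equivalently that $\bigcap_{J\in\mathcal{J}}\{W:U^\top W V_J=O\}=\{O\}$, which follows because $\bigcup_{J\in\mathcal{J}}J=\{1,\dots,n\}$ when $s<r$.

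When $s=r$: here $\Gamma$ already has cardinality $r$, so $\mathcal{J}=\{\Gamma\}$ is a singleton and $\mathcal{M}_X(r)=\mathcal{M}_X(\Gamma)$ is a single subspace. Then part (i) applied with $J=\Gamma$ gives directly $\mathrm{N}^F_{\mathcal{M}_X(r)}(X)=\mathrm{N}_{\mathcal{M}_X(\Gamma)}(X)=\{W:U^\top W V_\Gamma=O\}$, which is exactly the claimed formula. I would close by remarking that in this case the right-hand side also coincides with $\mathrm{N}_{\mathcal{M}^s}(X)=\mathrm{N}^F_{\mathcal{M}(r)}(X)$ from Lemma \ref{NBF}, consistent with the whole point of introducing $\mathcal{M}_X(r)$: it has the same Fr\'echet normal cone as $\mathcal{M}(r)$ at $X$ but, being a union of subspaces, it is far more tractable for the intersection rule to come.
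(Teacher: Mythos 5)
Your proposal is correct and follows essentially the same route as the paper: part (i) is the standard orthogonal-complement computation for a linear subspace, and part (ii) uses the union rule \eqref{03} plus polarity to reduce $\mathrm{N}^F_{\mathcal{M}_X(r)}(X)$ to $\bigcap_{J\in\mathcal{J}}\{W: U^\top W V_J=O\}$, which collapses to $\{O\}$ when $s<r$ because $\bigcup_{J\in\mathcal{J}}J=\{1,\dots,n\}$, and to the single-subspace formula when $\mathcal{J}=\{\Gamma\}$. The abandoned monotonicity detour is harmless since you correctly identify that it points the wrong way and replace it with the argument the paper actually uses.
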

\begin{proof}
The first part follows readily from the definition of the subspace $\mathcal{M}_X(J)$. Note that
 \begin{eqnarray}\label{J0}
\mathrm{N}^F_{\mathcal{M}_X(r)}(X )&=&\left(\mathrm{T}^B_{\mathcal{M}_X(r)}(X )\right)^\circ
=\left(\mathrm{T}^B_{\bigcup \limits_{J\in\mathcal{J} }\mathcal{M}_X(J)}(X )\right)^\circ=\left(\bigcup \limits_{J\in\mathcal{J} }\mathrm{T}_{\mathcal{M}_X(J)}(X )\right)^\circ\nonumber\\
&=&\bigcap \limits_{J\in\mathcal{J} }\mathrm{N}_{\mathcal{M}_X(J)}(X )\nonumber\\
&=&\bigcap \limits_{J\in\mathcal{J} }\{W\in\mathbb{R}^{m\times n}: U^\top WV_J=O\}\nonumber\\
&=& \{W\in\mathbb{R}^{m\times n}: U^\top WV_J=O,~\forall J\in\mathcal{J}\}.
\end{eqnarray}
Rewrite $\mathcal{J}=\{J_1,\ldots, J_{t_0}\}$ with $t_0=\left(
                                                               \begin{array}{c}
                                                                 n-s \\
                                                                 r-s \\
                                                               \end{array}
                                                             \right)$
 being the combinatorial number. Then $\bigcup_{i=1}^{t_0}J_i=\{1,\ldots,n\}$. If $s=r$, then $\mathcal{J}=\{\Gamma\}$, and hence $\mathrm{N}^F_{\mathcal{M}_X(r)}(X)=\{W\in\mathbb{R}^{m\times n}:U^\top WV_\Gamma=O \}$. If $s<r$, for any $J_i\in\mathcal{J}$ and $W\in\mathrm{N}^F_{\mathcal{M}_X(r)}(X )$, we have
 $U^\top WV_{J_i}=O$, which indicates that $U^\top WV=O$. Thus, we get that $\mathrm{N}^F_{\mathcal{M}_X(r)}(X )=\{O\}$. This completes the proof.
\end{proof}

Let $X\in \mathbb{R}^{m\times n}$ with its SVD as in \eqref{SVD}. For any given matrices $A^1$, $\ldots$, $A^l\in\mathbb{R}^{m\times n}$, denote
\begin{equation}\label{Ti}
 T^i_X=
     \left[
      \begin{array}{cc}
      U_{\Gamma} ^\top A^iV_{\Gamma}&   U_{\Gamma} ^\top A^iV_{{\Gamma}_n^\bot} \\
       U_{{\Gamma }_m^\bot} ^\top  A^iV_{\Gamma }& 0 \\
     \end{array}
   \right],~~~~~
R^i_X= U^\top A^iV_{\Gamma}
\end{equation}
for $i=1,\ldots,l$.
Introduce the following two assumptions.

\begin{assumption}\label{assumption 1}
The matrices $T^i_X$, $i=1,\ldots, l$, are linearly independent.
\end{assumption}

\begin{assumption}\label{assumption 2}
The matrices $R^i_X$, $i=1,\ldots, l$, are linearly independent.
\end{assumption}

It is worth mentioning that Assumption \ref{assumption 1} is equivalent to Assumption \ref{assumption 2} in symmetric matrices space and they are uniformly called the primal nondegeneracy condition in \cite[Definition 5]{Alizadeh1997Complementarity} in the context of semidefinite programming. However, Assumption \ref{assumption 2} is a stronger variant of Assumption \ref{assumption 1} in $\mathbb{R}^{m\times n}$.  Let $X$ be a feasible point of the problem \ref{P} with $\text{rank}(X)=s$. By the discussion of \cite[Section 5, Page 480]{bonnans2000perturbation}, we have that Assumption \ref{assumption 1} can happen only if  $l\leq mn-(m-s)(n-s)$. Similarly, a necessary condition for Assumption \ref{assumption 2} holding is $l\leq ms$. Based on these two assumptions, we have the following BQ holds.

\begin{proposition}\label{Assu}
 For any $X\in\mathcal{L}\cap\mathcal{M}(r)$ with its SVD as in \eqref{SVD}, and any index set $J$ satisfying $\Gamma \subseteq J$, we have
\begin{itemize}
\item[(i)]  If  Assumption \ref{assumption 1} holds at $X$, then $\mathrm{N}^M_{\mathcal{M}(r)}(X )\cap \mathrm{N}_{\mathcal{L}}(X )=\{O\}$;
\item[ (ii)]  If  Assumption \ref{assumption 2} holds at $X$, then $\mathrm{N}_{\mathcal{M}_X(J)}(X )\cap \mathrm{N}_{\mathcal{L}}(X )=\{O\}$.
\end{itemize}
\end{proposition}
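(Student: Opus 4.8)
The plan is to reduce both statements to the respective linear independence hypotheses by a direct block computation in the coordinates provided by the SVD \eqref{SVD}. In both cases one starts with an arbitrary $W$ in the claimed intersection and uses the description $\mathrm{N}_{\mathcal{L}}(X)=\mathrm{range}\,\mathcal{A}^*$ to write $W=\sum_{i=1}^l y_iA^i$ for some $y\in\mathbb{R}^l$; the goal is then to conclude $y=0$, hence $W=O$.

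For part (i), I would first invoke Lemma \ref{NBF} to record that $\mathrm{N}^M_{\mathcal{M}(r)}(X)\subseteq \mathrm{N}_{\mathcal{M}^s}(X)$, so $W=U_{\Gamma_m^\perp}DV_{\Gamma_n^\perp}^\top$ for some $D$; in particular $U_\Gamma^\top W=O$ and $WV_\Gamma=O$, which gives the block vanishings $U_\Gamma^\top WV_\Gamma=O$, $U_\Gamma^\top WV_{\Gamma_n^\perp}=O$, and $U_{\Gamma_m^\perp}^\top WV_\Gamma=O$. Multiplying $W=\sum_i y_iA^i$ on the left by $U^\top$ and on the right by $V$ and reading off the three blocks other than the lower-right one — which is identically zero in $T^i_X$ by \eqref{Ti} — yields exactly $\sum_i y_iT^i_X=O$. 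Assumption \ref{assumption 1} then forces $y=0$ and therefore $W=O$; note that the rank bound built into $\mathrm{N}^M_{\mathcal{M}(r)}(X)$ is not even needed here.

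For part (ii), the argument is parallel and shorter. By Lemma \ref{NNN}(i), $W\in \mathrm{N}_{\mathcal{M}_X(J)}(X)$ means $U^\top WV_J=O$, and since $\Gamma\subseteq J$ this restricts to $U^\top WV_\Gamma=O$. Multiplying $W=\sum_i y_iA^i$ on the left by $U^\top$ and on the right by $V_\Gamma$ gives $\sum_i y_iR^i_X=O$ with $R^i_X=U^\top A^iV_\Gamma$ as in \eqref{Ti}, so Assumption \ref{assumption 2} forces $y=0$ and $W=O$. The step $\Gamma\subseteq J \Rightarrow U^\top WV_\Gamma=O$ is the only place where the hypothesis on $J$ is used.

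There is no substantial obstacle in this proof; the care needed is entirely bookkeeping: invoking the correct inclusion from Lemma \ref{NBF}, matching the vanishing blocks of $U^\top WV$ against the nonzero blocks of $T^i_X$ in \eqref{Ti}, and staying consistent with the convention $m\ge n$ adopted before the proposition (so that $V_\Gamma$ and $V_J$ have $s$ and $r$ columns and the formula \eqref{TNspace} of Lemma \ref{NNN} applies verbatim). If anything, the mild subtlety worth flagging is that the argument in fact establishes the stronger identity $\mathrm{N}_{\mathcal{M}^s}(X)\cap \mathrm{N}_{\mathcal{L}}(X)=\{O\}$ under Assumption \ref{assumption 1}, from which part (i) follows a fortiori.
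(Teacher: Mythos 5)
Your proof is correct, and for part (ii) it is essentially the paper's own argument: from $U^\top WV_J=O$ and $\Gamma\subseteq J$ one extracts $U^\top WV_\Gamma=\sum_i y_iR^i_X=O$ and invokes Assumption \ref{assumption 2} (the paper phrases this as a contradiction, you phrase it directly; the content is identical).

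For part (i) you take a genuinely more self-contained route. The paper disposes of (i) in one line by citing Theorem 6 of Alizadeh--Haeberly--Overton for the equivalence between Assumption \ref{assumption 1} and $\mathrm{N}_{\mathcal{M}^s}(X)\cap\mathrm{N}_{\mathcal{L}}(X)=\{O\}$, and then uses the inclusion $\mathrm{N}^M_{\mathcal{M}(r)}(X)\subseteq\mathrm{N}_{\mathcal{M}^s}(X)$ from Lemma \ref{NBF}. You instead prove the needed implication directly: writing $W=U_{\Gamma_m^\perp}DV_{\Gamma_n^\perp}^\top$, the three blocks of $U^\top WV$ other than the lower-right one vanish, and since the lower-right block of each $T^i_X$ in \eqref{Ti} is zero by definition, the identity $U^\top WV=\sum_i y_iU^\top A^iV$ restricted to those three blocks reads $\sum_i y_iT^i_X=O$, whence $y=0$. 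This buys a proof that does not lean on an external nondegeneracy theorem stated for semidefinite programming, and it makes transparent exactly which blocks of $U^\top A^iV$ matter; the paper's citation buys brevity and, incidentally, the converse implication, which is not needed here. Your closing observation --- that the argument really shows $\mathrm{N}_{\mathcal{M}^s}(X)\cap\mathrm{N}_{\mathcal{L}}(X)=\{O\}$, so the rank bound in $\mathrm{N}^M_{\mathcal{M}(r)}(X)$ plays no role --- matches what the paper's proof implicitly uses. No gaps.
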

\begin{proof} 
(i) By virtue of Theorem 6 in \cite {Alizadeh1997Complementarity}, Assumption \ref{assumption 1} holds at $X $ if and only if $\mathrm{N}_{\mathcal{M}^s}(X )\cap \mathrm{N}_{\mathcal{L}}(X )=\{O\}$.  The desired assertion in (i) then follows from the fact $\mathrm{N}^M_{\mathcal{M}(r)}(X )\subseteq \mathrm{N}_{\mathcal{M}^s}(X )$.

(ii) Assume on the contrary that there exists a nonzero matrix $$W\in \mathrm{N}_{\mathcal{M}_X(J)}(X )\cap \mathrm{N}_{\mathcal{L}}(X ),$$ that is, there exist $t^i\in\mathbb{R}$ ($i=1,\ldots, l$) not all zero such that $\sum_{i=1}^l t^iA^i \in  \mathrm{N}_{\mathcal{M}_X(J)}(X )$. From \eqref{TNspace},  we get $U ^\top  \Sigma t^iA^iV_{J}=O$, which implies  that $$ U^\top  \Sigma_{i=1}^l t^iA^iV_{\Gamma }=O.$$ Thus
\begin{eqnarray*}
\Sigma_{i=1}^l t^iR_X^i=
     U^\top  \Sigma_{i=1}^l t^iA^iV_{\Gamma} =O
   \end{eqnarray*}
   for~$i=1,\ldots,l$, which contradicts to the linear independency of $R_X^i$'s in Assumption \ref{assumption 2}. Thus, we have $\mathrm{N}_{\mathcal{M}_X(J)}(X )\cap \mathrm{N}_{\mathcal{L}}(X )=\{O\}$. This completes the proof.
 \end{proof}

\begin{lemma}\label{TBeq}
Let $X \in\mathcal{L}\cap\mathcal{M}(r)$ with its SVD as in \eqref{SVD}. If Assumption \ref{assumption 2} holds at $X$, then
\begin{equation}\label{Nequ}
\mathrm{T}^B_{\mathcal{L}\cap \mathcal{M}_X(r)}(X )=\mathrm{T}_{\mathcal{L}}(X )\cap \mathrm{T}^B_{\mathcal{M}_X(r)}(X ).
\end{equation}
\end{lemma}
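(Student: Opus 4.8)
The plan is to reduce the asserted identity for the union $\mathcal{M}_X(r)=\bigcup_{J\in\mathcal{J}}\mathcal{M}_X(J)$ to the analogous identity for each individual subspace $\mathcal{M}_X(J)$, and then reassemble using the Bouligand-tangent-cone calculus for unions. First I would note that, since $\Gamma\subseteq J$ for every $J\in\mathcal{J}$, the point $X$ (with SVD as in \eqref{SVD}) lies in each $\mathcal{M}_X(J)$, hence in each $\mathcal{L}\cap\mathcal{M}_X(J)$, and that each such set is closed, being the intersection of the affine manifold $\mathcal{L}$ with a linear subspace. Consequently $\mathcal{L}\cap\mathcal{M}_X(r)=\bigcup_{J\in\mathcal{J}}\bigl(\mathcal{L}\cap\mathcal{M}_X(J)\bigr)$ is a finite union of closed sets, and two applications of \eqref{03} in Lemma \ref{TNUI} — one to $\mathcal{M}_X(r)$ itself and one to $\mathcal{L}\cap\mathcal{M}_X(r)$ — reduce the goal \eqref{Nequ} to proving, for every fixed $J\in\mathcal{J}$, the single-subspace rule $\mathrm{T}^B_{\mathcal{L}\cap\mathcal{M}_X(J)}(X)=\mathrm{T}_{\mathcal{L}}(X)\cap\mathrm{T}_{\mathcal{M}_X(J)}(X)$; distributing the fixed subspace $\mathrm{T}_{\mathcal{L}}(X)$ over the finite union $\bigcup_{J}\mathrm{T}_{\mathcal{M}_X(J)}(X)=\mathrm{T}^B_{\mathcal{M}_X(r)}(X)$ then recovers \eqref{Nequ}.

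For a fixed $J$, I would apply the regular intersection formula \eqref{06} of Lemma \ref{TNUI} with $\Omega_1=\mathcal{L}$ and $\Omega_2=\mathcal{M}_X(J)$. Both are smooth manifolds (an affine subspace and, by \eqref{SJX}, a linear subspace), hence Clarke regular at $X$, so the regularity hypothesis of \eqref{06} is met. The remaining hypothesis is the basic qualification $\mathrm{N}^M_{\mathcal{L}}(X)\cap\bigl(-\mathrm{N}^M_{\mathcal{M}_X(J)}(X)\bigr)=\{O\}$; but for the subspace $\mathcal{M}_X(J)$ and the affine manifold $\mathcal{L}$ the Mordukhovich, Fréchet, and tangent-polar normal cones all coincide with the ordinary normal subspaces, and a subspace is invariant under negation, so this BQ is precisely $\mathrm{N}_{\mathcal{M}_X(J)}(X)\cap\mathrm{N}_{\mathcal{L}}(X)=\{O\}$ — which is exactly the conclusion of Proposition \ref{Assu}(ii) under Assumption \ref{assumption 2}. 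Hence \eqref{06} delivers $\mathrm{T}^B_{\mathcal{L}\cap\mathcal{M}_X(J)}(X)=\mathrm{T}_{\mathcal{L}}(X)\cap\mathrm{T}_{\mathcal{M}_X(J)}(X)$.

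Assembling the pieces: $\mathrm{T}^B_{\mathcal{L}\cap\mathcal{M}_X(r)}(X)=\bigcup_{J}\mathrm{T}^B_{\mathcal{L}\cap\mathcal{M}_X(J)}(X)=\bigcup_{J}\bigl(\mathrm{T}_{\mathcal{L}}(X)\cap\mathrm{T}_{\mathcal{M}_X(J)}(X)\bigr)=\mathrm{T}_{\mathcal{L}}(X)\cap\bigcup_{J}\mathrm{T}_{\mathcal{M}_X(J)}(X)=\mathrm{T}_{\mathcal{L}}(X)\cap\mathrm{T}^B_{\mathcal{M}_X(r)}(X)$, the last equality again by \eqref{03}. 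I do not expect a genuine obstacle here; the only point requiring care is the bookkeeping that matches the abstract BQ of Lemma \ref{TNUI} (stated with Mordukhovich normal cones) to Proposition \ref{Assu}(ii), i.e. verifying that for the subspace $\mathcal{M}_X(J)$ these cones collapse to the normal subspace so that the sign is immaterial. Everything else is the routine union/intersection calculus already recorded in Lemma \ref{TNUI}, together with the smoothness (hence Clarke regularity) of affine and linear subspaces.
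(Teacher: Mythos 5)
Your proposal is correct and follows essentially the same route as the paper: reduce to each subspace $\mathcal{M}_X(J)$, apply \eqref{06} using the Clarke regularity of $\mathcal{L}$ and $\mathcal{M}_X(J)$ together with the basic qualification supplied by Proposition \ref{Assu}(ii) under Assumption \ref{assumption 2}, and then reassemble via \eqref{03} and distributivity of intersection over the finite union. Your explicit check that the Mordukhovich-cone BQ of Lemma \ref{TNUI} collapses to the subspace condition of Proposition \ref{Assu}(ii) is a detail the paper leaves implicit, but otherwise the arguments coincide.
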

\begin{proof}
Note that $\mathcal{L}$ and $\mathcal{M}_X(J)$ are regular at $X$. Since Assumption \ref{assumption 2} holds at $X$, from Proposition \ref{Assu} (ii) and \eqref{06}, we obtain that

\begin{eqnarray}\label{TJ}\mathrm{T}_{\mathcal{L}\cap\mathcal{M}_X(J)}(X )=\mathrm{T}_{\mathcal{L}}(X )\cap \mathrm{T}_{\mathcal{M}_X(J)}(X ).\end{eqnarray}
This together with \eqref{03} yields 
\begin{eqnarray}\label{T}
\mathrm{T}^B_{\mathcal{L}\cap\mathcal{M}_X(r)}(X )&=&\bigcup \limits_{J\in\mathcal{J} }\mathrm{T}_{\mathcal{L}\cap\mathcal{M}_X(J)}(X )
=\bigcup \limits_{J\in\mathcal{J} }\left(\mathrm{T}_{\mathcal{L}}(X )\cap \mathrm{T}_{\mathcal{M}_X(J)}(X )\right)\\\nonumber
&=&\mathrm{T}_{\mathcal{L}}(X )\cap \left(\bigcup \limits_{J\in\mathcal{J} }\mathrm{T}_{\mathcal{M}_X(J)}(X )\right)\\\nonumber
&=&\mathrm{T}_{\mathcal{L}}(X )\cap \mathrm{T}^B_{\bigcup \limits_{J\in\mathcal{J} }\mathcal{M}_X(J)}(X )\\\nonumber
&=&\mathrm{T}_{\mathcal{L}}(X )\cap \mathrm{T}^B_{\mathcal{M}_X(r)}(X ).
\end{eqnarray}
This completes the proof.
\end{proof}

Based on the above result, we state and prove the intersection rule of the Fr\'{e}chet normal cone to $\mathcal{L}\cap\mathcal{M}(r)$.

\begin{theorem}\label{NFeq}
Let $X \in\mathcal{L}\cap\mathcal{M}(r)$ with $s=:\mathrm{rank}(X )$.
\begin{itemize}
\item[(i)] If $s=r$ and Assumption \ref{assumption 1} holds at $X$, then
\begin{equation}\label{Nequ1}
\mathrm{N}^F_{\mathcal{L}\cap\mathcal{M}(r)}(X )=\mathrm{N}_{\mathcal{L}}(X )+\mathrm{N}^F_{\mathcal{M}(r)}(X ).
\end{equation}
\item[(ii)] If $s<r$ and Assumption \ref{assumption 2} holds at $X$, then
\begin{equation}\label{Nequ2}
\mathrm{N}^F_{\mathcal{L}\cap\mathcal{M}(r)}(X)=\mathrm{N}_{\mathcal{L}}(X )+\mathrm{N}^F_{\mathcal{M}(r)}(X)=\mathrm{N}_{\mathcal{L}}(X).
\end{equation}
\end{itemize}
\end{theorem}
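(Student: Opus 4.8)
The plan is to treat the two cases separately, in each case sandwiching $\mathrm{N}^F_{\mathcal{L}\cap\mathcal{M}(r)}(X)$ between the two obvious bounds and then collapsing them using the auxiliary subset $\mathcal{M}_X(r)$ constructed above. The ``$\supseteq$'' direction is free in both cases: it is exactly the general inclusion $\mathrm{N}^F_{\Omega_1\cap\Omega_2}(X)\supseteq\mathrm{N}^F_{\Omega_1}(X)+\mathrm{N}^F_{\Omega_2}(X)$ from \eqref{04} of Lemma \ref{TNUI}, applied with $\Omega_1=\mathcal{L}$ (for which the Fr\'echet normal cone is the normal space $\mathrm{N}_{\mathcal{L}}(X)$) and $\Omega_2=\mathcal{M}(r)$. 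So the whole content is the reverse inclusion.

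\emph{Case (i), $s=r$.} Here $\mathcal{J}=\{\Gamma\}$, so $\mathcal{M}_X(r)=\mathcal{M}_X(\Gamma)$ is a single subspace and in particular is regular at $X$. Since Assumption \ref{assumption 1} holds, Proposition \ref{Assu}(i) gives the BQ $\mathrm{N}^M_{\mathcal{M}(r)}(X)\cap\mathrm{N}_{\mathcal{L}}(X)=\{O\}$ for the pair $(\mathcal{M}(r),\mathcal{L})$ (note $\mathrm{N}^M_{\mathcal{L}}(X)=\mathrm{N}_{\mathcal{L}}(X)$ since $\mathcal{L}$ is a manifold). Now I would invoke \eqref{05} of Lemma \ref{TNUI} to get $\mathrm{N}^M_{\mathcal{L}\cap\mathcal{M}(r)}(X)\subseteq\mathrm{N}_{\mathcal{L}}(X)+\mathrm{N}^M_{\mathcal{M}(r)}(X)$, but this is not yet the Fr\'echet statement I want; rather the cleaner route is to use $\mathcal{M}_X(r)$ as a regular inner approximation. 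Precisely: $\mathcal{L}\cap\mathcal{M}_X(r)\subseteq\mathcal{L}\cap\mathcal{M}(r)$ and the two sets agree locally near $X$ up to the relevant order, so $\mathrm{N}^F_{\mathcal{L}\cap\mathcal{M}(r)}(X)\subseteq\mathrm{N}^F_{\mathcal{L}\cap\mathcal{M}_X(r)}(X)$ via the tangent-cone inclusion $\mathrm{T}^B_{\mathcal{L}\cap\mathcal{M}_X(r)}(X)\subseteq\mathrm{T}^B_{\mathcal{L}\cap\mathcal{M}(r)}(X)$ and polarity. When $s=r$, Assumption \ref{assumption 1} should be shown to imply Assumption \ref{assumption 2} restricted to $J=\Gamma$ (indeed $R^i_X=U^\top A^i V_\Gamma$ collects the first two block rows of $T^i_X$ together with the bottom-left block, and when $|\Gamma|=r$ these are the same data), so Lemma \ref{TBeq} applies and, since $\mathcal{L}$ and $\mathcal{M}_X(\Gamma)$ are regular with $\mathrm{N}_{\mathcal{M}_X(\Gamma)}(X)\cap\mathrm{N}_{\mathcal{L}}(X)=\{O\}$ by Proposition \ref{Assu}(ii), the equality part of \eqref{06} gives $\mathrm{N}^F_{\mathcal{L}\cap\mathcal{M}_X(r)}(X)=\mathrm{N}_{\mathcal{L}}(X)+\mathrm{N}_{\mathcal{M}_X(\Gamma)}(X)$. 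Finally, $\mathrm{N}_{\mathcal{M}_X(\Gamma)}(X)=\mathrm{N}^F_{\mathcal{M}_X(r)}(X)=\mathrm{N}^F_{\mathcal{M}(r)}(X)=\mathrm{N}_{\mathcal{M}^s}(X)$ by Lemma \ref{NNN}(ii) and Lemma \ref{NBF} (using $s=r$), which yields \eqref{Nequ1}.

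\emph{Case (ii), $s<r$.} Now $\mathrm{N}^F_{\mathcal{M}(r)}(X)=\{O\}$ by Lemma \ref{NBF}, so the claim reduces to $\mathrm{N}^F_{\mathcal{L}\cap\mathcal{M}(r)}(X)=\mathrm{N}_{\mathcal{L}}(X)$. The inclusion $\supseteq$ is again \eqref{04}. For $\subseteq$, I would run the same $\mathcal{M}_X(r)$ argument: Assumption \ref{assumption 2} gives, via Proposition \ref{Assu}(ii), the BQ $\mathrm{N}_{\mathcal{M}_X(J)}(X)\cap\mathrm{N}_{\mathcal{L}}(X)=\{O\}$ for each $J\in\mathcal{J}$, hence by \eqref{06} each $\mathcal{L}\cap\mathcal{M}_X(J)$ is a regular manifold at $X$ with $\mathrm{N}_{\mathcal{L}\cap\mathcal{M}_X(J)}(X)=\mathrm{N}_{\mathcal{L}}(X)+\mathrm{N}_{\mathcal{M}_X(J)}(X)$. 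Taking tangent cones and using Lemma \ref{TBeq} (equation \eqref{Nequ}) plus polarity, together with Lemma \ref{NNN}(ii) giving $\mathrm{N}^F_{\mathcal{M}_X(r)}(X)=\{O\}$ when $s<r$, I get $\mathrm{N}^F_{\mathcal{L}\cap\mathcal{M}_X(r)}(X)=\bigl(\mathrm{T}_{\mathcal{L}}(X)\cap\mathrm{T}^B_{\mathcal{M}_X(r)}(X)\bigr)^\circ$; the point is that $\mathrm{T}^B_{\mathcal{M}_X(r)}(X)=\bigcup_{J}\mathrm{T}_{\mathcal{M}_X(J)}(X)$ spans everything transversally to $\mathcal{L}$ in the sense that its polar relative to $\ker\mathcal{A}$ is just $\mathrm{range}\,\mathcal{A}^*=\mathrm{N}_{\mathcal{L}}(X)$, which follows because $\bigcap_J\mathrm{N}_{\mathcal{M}_X(J)}(X)=\{O\}$ (shown in Lemma \ref{NNN}(ii) via $\bigcup_J J=\{1,\dots,n\}$). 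Combining with $\mathrm{N}^F_{\mathcal{L}\cap\mathcal{M}(r)}(X)\subseteq\mathrm{N}^F_{\mathcal{L}\cap\mathcal{M}_X(r)}(X)$ (from $\mathcal{L}\cap\mathcal{M}_X(r)\subseteq\mathcal{L}\cap\mathcal{M}(r)$ and \eqref{03}/\eqref{04}) finishes the case.

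\emph{Main obstacle.} The delicate step is the passage from the auxiliary set $\mathcal{M}_X(r)$ back to $\mathcal{M}(r)$: one must argue that replacing $\mathcal{M}(r)$ by its subset $\mathcal{M}_X(r)$ does not enlarge the Fr\'echet normal cone of the intersection, i.e. that $\mathrm{N}^F_{\mathcal{L}\cap\mathcal{M}(r)}(X)\subseteq\mathrm{N}^F_{\mathcal{L}\cap\mathcal{M}_X(r)}(X)$, and conversely that the sum $\mathrm{N}_{\mathcal{L}}(X)+\mathrm{N}^F_{\mathcal{M}(r)}(X)$ we want on the right is genuinely recovered (not overshot) — this is where the distinction between the hypotheses in (i) ($s=r$, Assumption \ref{assumption 1}) and (ii) ($s<r$, Assumption \ref{assumption 2}) matters, since in the rank-deficient case $\mathcal{M}_X(r)$ is a nontrivial union and one leans on $\bigcup_{J\in\mathcal{J}}J=\{1,\dots,n\}$ to see that the extra directions collapse. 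The tangent-cone identity of Lemma \ref{TBeq} is the engine that makes both directions line up after taking polars.

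\qed
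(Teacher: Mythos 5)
Your overall architecture (free ``$\supseteq$'' from \eqref{04}, then an upper bound via the auxiliary set $\mathcal{M}_X(r)$) matches the paper's strategy for case (ii), but both cases of your write-up contain genuine gaps.

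\textbf{Case (i).} Two problems. First, your claim that Assumption \ref{assumption 1} yields Assumption \ref{assumption 2} when $s=r$ because ``these are the same data'' is false: $R^i_X=U^\top A^iV_\Gamma$ consists only of the first block \emph{column} of $T^i_X$ and omits the block $U_\Gamma^\top A^iV_{\Gamma_n^\perp}$, so the $T^i_X$ can be linearly independent while the $R^i_X$ are not (the paper explicitly notes that Assumption \ref{assumption 2} is strictly stronger in $\mathbb{R}^{m\times n}$; the implication goes the other way). Second, even granting access to $\mathcal{M}_X(\Gamma)$, your route terminates at $\mathrm{N}_{\mathcal{L}}(X)+\mathrm{N}_{\mathcal{M}_X(\Gamma)}(X)$, and by Lemma \ref{NNN} this normal space is $\{W: U^\top WV_\Gamma=O\}$, which strictly contains $\mathrm{N}^F_{\mathcal{M}(r)}(X)=\{U_{\Gamma_m^\perp}DV_{\Gamma_n^\perp}^\top\}$ whenever $n>r$ (it additionally admits the $U_\Gamma M_{12}V_{\Gamma_n^\perp}^\top$ directions). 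So your upper bound overshoots the target and the sandwich does not close. The route you dismissed as ``not yet the Fr\'echet statement'' is in fact the correct one: when $s=r$, Lemma \ref{NBF} gives $\mathrm{N}^F_{\mathcal{M}(r)}(X)=\mathrm{N}_{\mathcal{M}^s}(X)=\mathrm{N}^M_{\mathcal{M}(r)}(X)$, so $\mathcal{M}(r)$ is Clarke regular at $X$; under the BQ from Proposition \ref{Assu}(i) the intersection is then regular as well, and the Mordukhovich inclusion \eqref{05} \emph{is} the Fr\'echet inclusion. This is exactly how the paper argues.

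\textbf{Case (ii).} Your reduction to $\bigcap_{J\in\mathcal{J}}\bigl(\mathrm{N}_{\mathcal{L}}(X)+\mathrm{N}_{\mathcal{M}_X(J)}(X)\bigr)$ via Lemma \ref{TBeq} and polarity is correct and is the paper's route. But the final collapse to $\mathrm{N}_{\mathcal{L}}(X)$ does not follow ``because $\bigcap_J\mathrm{N}_{\mathcal{M}_X(J)}(X)=\{O\}$'': intersection does not distribute over Minkowski sums of subspaces (e.g.\ $A=\mathrm{span}(e_1)$, $B_1=\mathrm{span}(e_2)$, $B_2=\mathrm{span}(e_1+e_2)$ gives $(A+B_1)\cap(A+B_2)\neq A+(B_1\cap B_2)$). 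This is precisely where the paper's real work happens: write $H=\sum_i t^i(J)A^i+W(J)$ for every $J$, apply $U^\top(\cdot)V_\Gamma$ — which annihilates each $W(J)$ since $\Gamma\subseteq J$ — to get $U^\top HV_\Gamma=\sum_i t^i(J)R^i_X$, and invoke the linear independence of the $R^i_X$ (Assumption \ref{assumption 2}) to conclude that the multipliers $t^i(J)$ are the same for all $J$, whence $H-\sum_i t^iA^i\in\bigcap_J\mathrm{N}_{\mathcal{M}_X(J)}(X)=\{O\}$. Without this multiplier-uniqueness argument your case (ii) is incomplete.
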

\begin{proof}
(i) If $s=r$, it is known from Lemma \ref{NBF} that $\mathrm{N}^F_{\mathcal{M}(r)}(X ) = \mathrm{N}_{\mathcal{M}^s}(X)= \mathrm{N}^M_{\mathcal{M}(r)}(X )$. Thus, in this case, $\mathcal{M}(r)$ is regular at $X$. Together with the regularity of the convex set $\mathcal{L}$ at $X$, we obtain that
$\mathcal{L}\cap\mathcal{M}(r)$ is also regular at $X$, i.e., $\mathrm{N}^F_{\mathcal{L}\cap\mathcal{M}(r)}(X) = \mathrm{N}^M_{\mathcal{L}\cap\mathcal{M}(r)}(X)$. By utilizing (i) of Proposition \ref{Assu}, Assumption \ref{assumption 1} ensures that
$\mathrm{N}^M_{\mathcal{M}(r)}(X)\cap \left(-\mathrm{N}_{\mathcal{L}}(X)\right) = \mathrm{N}^M_{\mathcal{M}(r)}(X)\cap \mathrm{N}_{\mathcal{L}}(X) =\{O\}$. Thus, from \eqref{05} in Lemma \ref{TNUI}, $\mathrm{N}^F_{\mathcal{L}\cap\mathcal{M}(r)}(X)\subseteq \mathrm{N}^F_{\mathcal{M}(r)}(X) + \mathrm{N}_{\mathcal{L}}(X)$. Combining with the second inclusion in \eqref{04}, the desired assertion is obtained.

(ii) The second equality follows readily from \eqref{TNspace1}. For the remaining equality, by virtue of the second inclusion in \eqref{04}, it suffices to show
$$\mathrm{N}^F_{\mathcal{L}\cap\mathcal{M}_X(r)}(X)\subseteq \mathrm{N}_{\mathcal{L}}(X )+\mathrm{N}^F_{\mathcal{M}_X(r)}(X ).$$ From Lemma \ref{TBeq}, \eqref{01}  and \eqref{02}, we obtain that
\begin{eqnarray}\label{N}
\mathrm{N}^F_{\mathcal{L}\cap\mathcal{M}_X(r)}(X )&=&\left(\mathrm{T}^B_{\mathcal{L}\cap\mathcal{M}_X(r)}(X)\right)^\circ
=\left(\bigcup \limits_{J\in\mathcal{J} }\mathrm{T}_{\mathcal{L}}(X )\cap \mathrm{T}_{\mathcal{M}_X(J)}(X )\right)^\circ\\\nonumber
&=&\bigcap \limits_{J\in\mathcal{J} }\left(\mathrm{T}_{\mathcal{L}}(X )\cap \mathrm{T}_{\mathcal{M}_X(J)}(X ) \right)^\circ\\\nonumber
&=&\bigcap \limits_{J\in\mathcal{J} }\left(\mathrm{N}_{\mathcal{L}}(X )+ \mathrm{N}_{\mathcal{M}_X(J)}(X ) \right).
\end{eqnarray}
For any $H\in \mathrm{N}^F_{\mathcal{L}\cap\mathcal{M}_X(r)}(X )$, we have $H\in \mathrm{N}_{\mathcal{L}}(X )+ \mathrm{N}_{\mathcal{M}_{X}(J)}(X )$, for any $J\in\mathcal{J}$, that is, there exist $t^i(J)\in\mathbb{R}$, $i=1,\ldots, l$,  $W(J)\in \mathrm{N}_{\mathcal{M}_X(J)}(X ),$
  such that
  \begin{equation}\label{HW}
  H=\sum_{i=1}^l t^i(J)A^i+W(J), ~\forall~J\in\mathcal{J}.
   \end{equation}
Note that for each $J\in \mathcal{J}$, it holds that $U^\top W(J)V_{\Gamma}=O$ from \eqref{TNspace}.
  Pre- and post-multiplying both sides of the equation \eqref{HW} by $U_{\Gamma}^\top$ and $V_{\Gamma}$, respectively, we obtain
$$U^\top HV_{\Gamma}=\sum_{i=1}^l t^i(J)U_{\Gamma}^\top A^i V_{\Gamma}, ~\forall~J\in\mathcal{J}.$$
For any distinct index set $J_0\in\mathcal{J}$, we can also get that
  $$U^\top HV_{\Gamma}=\sum_{i=1}^l t^i(J_0)U_{\Gamma}^\top A^i V_{\Gamma},  ~\forall~J_0(\neq J)\in\mathcal{J}.$$
  Invoking the linear independency in Assumption \ref{assumption 2}, we have
  $$t^i(J)=t^i(J_0)=:t^i, ~\forall~J,J_0\in\mathcal{J}.$$
  This implies that
  $$H-\sum_{i=1}^l t^iA^i\in\mathrm{N}_{\mathcal{M}_X(J)}(X),~\forall~J\in\mathcal{J}$$
  i.e.,
  $$H-\sum_{i=1}^l t^iA^i\in \bigcap \limits_{J\in\mathcal{J}} \mathrm{N}_{\mathcal{M}_X(J)}(X)= \mathrm{N}^F_{\mathcal{M}_X(r)}(X).$$
  Thus, $H\in \mathrm{N}_{\mathcal{L}}(X)+\mathrm{N}^F_{\mathcal{M}_X(r)}(X)$, which indicates that $\mathrm{N}^F_{\mathcal{L}\cap\mathcal{M}(r)}(X)\subseteq \mathrm{N}_{\mathcal{L}}(X )+\mathrm{N}^F_{\mathcal{M}_X(r)}(X )=\mathrm{N}_{\mathcal{L}}(X )+\mathrm{N}^F_{\mathcal{M}(r)}(X)$.
This yields the assertions in \eqref{Nequ2}.
\end{proof}

\begin{remark}
Theorem \ref{NFeq} has generalized the decomposition property in \cite[Corollary 2.10]{Pan2017Optimality} from vectors to matrices.  Due to the disadvantage that the low-rank matrix set can not be decomposed into a union of a finite number of subspaces, the proof much more complicated than that for vectors while the sparse vector set does.

Specifically, by treating any vector $x$ in $\mathbb{R}^n$ as a diagonal matrix ${\text{Diag}}(x)$ in $\mathbb{R}^{n\times n}$, the involved orthogonal matrices $U$ and $V$ in the SVD for the latter diagonal matrix are both reduced to $I_n$. Thus, the so-called R-LICQ for $\widetilde{\mathcal{F}}:=\widetilde{\mathcal{L}}\cap S$
with  $$S:=\{x\in \mathbb{R}^n: \|x\|_0\leq r\},~\widetilde{\mathcal{L}}:=\{x\in \mathbb{R}^n: a_i^\top x = b_i, i=1,\ldots, l\}$$
at $x$ that introduced in \cite[Definition 2.4]{Pan2017Optimality} is equivalent to both Assumptions \ref{assumption 1} and \ref{assumption 2} for $$\mathcal{F}:=\left\{{\text{Diag}}(x)\in \mathbb{R}^{n\times n}: \langle \text{Diag}(a_i), {\text{Diag}}(x)\rangle = b_i, i = 1,\ldots, l, ~\text{rank(Diag}(x))\leq r\right\}$$
at ${\text{Diag}}(x)$. Meanwhile, the sparse set $S$ corresponds exactly to the low-rank matrix set $\mathcal{M}_{{\text{Diag}}(x)}(r)$.
\end{remark}

\section{Optimality Conditions}\label{sec:Opti}
The optimality analysis, including the first-order and the second-order optimality conditions for the \ref{P}, is proposed in this section, which will provide necessary theoretical fundamentals for handling such a nonconvex discontinuous matrix programming problem.

\subsection{Stationarity}
We begin by the introduction of two types of stationary points for the \ref{P}.
For any $X\in \mathcal{M}(r)$ and any $y\in \mathbb{R}^l$, define the Lagrangian function associated with the \ref{P} by
 \begin{equation}\label{LF}
L(X;y)=f(X)+\sum_{i=1}^ly_i[\langle A^i,X\rangle-b_i].
 \end{equation}

\begin{definition}\label{def-stat}
 Suppose $\alpha>0$ and $X \in\mathcal{M}(r)$.
\begin{itemize}
 \item[(i)] $X $ is called an $F$-stationary point of \ref{P} if there exists a vector $y \in\mathbb{R}^l$ such that
\begin{equation}\label{F-sta}
\left\{ \begin{array}{lr}
            \mathcal{A}(X )=b, &  \\
 -\nabla_XL(X ;y )\in \mathrm{N}^{F}_{\mathcal{M}(r)}(X )&\\
\end{array}
\right.
\end{equation}
\item[(ii)] $X $ is called an $\alpha$-stationary point of \ref{P} if there exists a vector $y \in\mathbb{R}^l$ such that
\begin{equation}\label{alpha-stationary-def}
\left\{ \begin{array}{lr}
            \mathcal{A}(X )=b, &  \\
             X \in\Pi_{\mathcal{M}(r)}(X -\alpha\nabla_XL(X ;y )). &
             \end{array}
\right.
\end{equation}

\end{itemize}
\end{definition}

The relationship between the above $F$- and $\alpha$- stationary point for the \ref{P} are discussed in the following proposition.

\begin{proposition}\label{F-M-alpha}
For any given $X \in\mathcal{L}\cap\mathcal{M}(r)$ with $s:=\text{rank}(X)$, $y\in\mathbb{R}^l$, and $\alpha>0$, denote
$$\beta=\left\{ \begin{array}{ll}
          \dfrac{{\sigma}_r(X)}{\|\nabla_{X}L(X ;y))\|_2}, & \text{if}~ \nabla_{X}L(X ;y)\neq 0,\\
            \infty, &\text{otherwise}.
             \end{array}
\right.$$
Consider the statements: (a) $X$ is an $F$-stationary point of \ref{P}; (b) $X$ is an $\alpha$-stationary point of \ref{P}; 
We have
\begin{description}
\item[(i)] $(b)\Rightarrow (a)$;
\item[(ii)] if $s=r$ and $\alpha\in(0,\beta]$, then $(a)\Rightarrow(b)$;
\item[(iii)] if $s < r$, then $(a)\Rightarrow(b)$.
\end{description}
\end{proposition}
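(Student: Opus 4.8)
The plan is to prove the three implications separately, in each case translating the defining conditions of the two stationarity notions into statements about singular value decompositions and then using the explicit projection formula onto $\mathcal{M}(r)$ together with Lemma \ref{NBF}.

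For (i), I would start from the $\alpha$-stationarity condition $X\in\Pi_{\mathcal{M}(r)}(X-\alpha\nabla_XL(X;y))$ and unpack what the projection formula forces. Writing $Z=X-\alpha\nabla_XL(X;y)$, the fact that $X$ is a best rank-$r$ approximation of $Z$ means $X$ keeps the top singular directions of $Z$ and discards the rest; in particular $Z-X=-\alpha\nabla_XL(X;y)$ must lie in the normal space of $\mathcal{M}^{\mathrm{rank}(X)}$ at $X$ (the leftover part of the SVD is supported on the complementary singular subspaces), and moreover its remaining singular values are dominated by $\sigma_r(X)$. Using $\mathrm{N}^F_{\mathcal{M}(r)}(X)=\mathrm{N}_{\mathcal{M}^s}(X)$ when $s=r$ and $\mathrm{N}^F_{\mathcal{M}(r)}(X)=\{O\}$ when $s<r$ from Lemma \ref{NBF}, I would check in both the $s=r$ and $s<r$ cases that $-\nabla_XL(X;y)\in\mathrm{N}^F_{\mathcal{M}(r)}(X)$, which together with the retained feasibility $\mathcal{A}(X)=b$ gives $F$-stationarity. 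The $s<r$ case is the more delicate one: here $X$ being a rank-$r$ projection of $Z$ with $X$ of rank strictly less than $r$ forces $Z=X$ exactly (there is no room to add singular mass), hence $\nabla_XL(X;y)=O$, which trivially lies in $\{O\}=\mathrm{N}^F_{\mathcal{M}(r)}(X)$.

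For (ii) and (iii) I would go in the reverse direction. Given $F$-stationarity, $-\nabla_XL(X;y)\in\mathrm{N}^F_{\mathcal{M}(r)}(X)$, and I must produce the projection identity. In case (iii), $s<r$ gives $\mathrm{N}^F_{\mathcal{M}(r)}(X)=\{O\}$, so $\nabla_XL(X;y)=O$, hence $X-\alpha\nabla_XL(X;y)=X$, which has rank $s\le r$ and so is its own (unique) rank-$r$ projection; thus $X\in\Pi_{\mathcal{M}(r)}(X-\alpha\nabla_XL(X;y))$ for every $\alpha>0$. In case (ii), $s=r$, so $-\nabla_XL(X;y)\in\mathrm{N}_{\mathcal{M}^r}(X)$, i.e. $\nabla_XL(X;y)=U_{\Gamma_m^\perp}DV_{\Gamma_n^\perp}^\top$ for some $D$, and this is exactly orthogonal (in the SVD block sense) to $X=U_\Gamma\Sigma_\Gamma V_\Gamma^\top$. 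Then $Z:=X-\alpha\nabla_XL(X;y)=U_\Gamma\Sigma_\Gamma V_\Gamma^\top-\alpha U_{\Gamma_m^\perp}DV_{\Gamma_n^\perp}^\top$ has a block-diagonal structure whose top singular values are those of $X$ provided $\sigma_r(X)\ge\alpha\|\nabla_XL(X;y)\|_2=\alpha\|D\|_2$; this is precisely the requirement $\alpha\le\beta$. Under that bound, truncating $Z$ to rank $r$ returns exactly $X$ (retaining the $\Sigma_\Gamma$ block and dropping the $-\alpha D$ block), so $X\in\Pi_{\mathcal{M}(r)}(Z)$.

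I expect the main obstacle to be the bookkeeping in case (ii): one has to be careful that the singular values of the block-diagonal matrix $Z$ really are the concatenation of $\{\sigma_i(X)\}$ and $\{\alpha\,\sigma_j(D)\}$ (this uses that $U_\Gamma,U_{\Gamma_m^\perp}$ and $V_\Gamma,V_{\Gamma_n^\perp}$ are orthogonal complements, so the two blocks live on orthogonal row- and column-spaces), and that the threshold $\beta$ is exactly what guarantees the top $r$ singular values of $Z$ are the $\sigma_i(X)$'s, with the correct handling of the possible tie $\sigma_r(Z)=\sigma_{r+1}(Z)$ (where the projection is set-valued but still contains $X$). Everything else reduces to invoking Lemma \ref{NBF} and the projection formula onto $\mathcal{M}(r)$ already recorded in the Preliminaries.
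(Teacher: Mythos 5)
Your proposal is correct and follows essentially the same route as the paper: the paper's proof consists of citing (from Theorem 2 of the reference on sparse/low-rank stationarity) exactly the characterization you derive, namely that $\alpha$-stationarity is equivalent to $\nabla_XL(X;y)=U_{\Gamma_m^\perp}DV_{\Gamma_n^\perp}^\top$ with $\|\nabla_XL(X;y)\|_2\le\sigma_r(X)/\alpha$ when $s=r$ and $\nabla_XL(X;y)=O$ when $s<r$, and then comparing with the formula for $\mathrm{N}^F_{\mathcal{M}(r)}(X)$ in Lemma \ref{NBF}. The only difference is that you work out the SVD/projection bookkeeping explicitly where the paper delegates it to a citation, so your write-up is a more self-contained version of the same argument.
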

\begin{proof}
 By mimicking the proof of Theorem 2 in \cite{Li2018Social}, we can obtain that $X$ is an $\alpha$-stationary point of \ref{P} if and only if
\begin{equation}\label{a}
\nabla_{X}L(X ;y)=\begin{cases}
U_{\Gamma_m^\bot}D V_{\Gamma_n^\bot}^\top,~\mathrm{with}~ \|\nabla_{X}L(X ;y))\|_2\leq\frac{1}{\alpha}{\sigma}_r(X),&\textrm{if~}s=r,\\
O,&\textrm{if~} s<r,\\
\end{cases}
\end{equation}
where $D\in \mathbb{R}^{(m-r)\times (n-r)}$. Together with the expressions of $\mathrm{N}^{F}_{\mathcal{M}(r)}(X)$
as presented in Lemma \ref{NBF}, we can obtain all the desired assertions.
\end{proof}

\subsection{First-order optimality}
The first-order optimality conditions in terms of the $F$-stationary point are stated as follows.
\begin{theorem}\label{F-min}
Let $X \in\mathcal{L}\cap\mathcal{M}(r)$ of rank $s$.
\begin{itemize}
\item[(i)]  Suppose that  $X $ is a local minimizer of \ref{P}. If $s=r$ and Assumption \ref{assumption 1} holds at $X$ or $s<r$ and Assumption \ref{assumption 2} holds at $X$, then $X$ is an $F$-stationary point of \ref{P}.
\item[(ii)] Suppose that $f$ is a convex function and $X$ is an $F$-stationary point of \ref{P}. If $s=r$, then $X$ is a global minimizer of \ref{P} restricted on $\mathcal{M}_X(\Gamma)$; If $s<r$, then $X$ is a global minimizer of \ref{P}.
\end{itemize}
\end{theorem}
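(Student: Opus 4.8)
The plan is to exploit the normal cone intersection rule of Theorem~\ref{NFeq} together with the characterization of $\mathrm{N}^F_{\mathcal{M}(r)}(X)$ in Lemma~\ref{NBF}, and to translate the geometric first-order condition into the Lagrangian-based condition \eqref{F-sta}. For part~(i), I would start from the classical fact that if $X$ is a local minimizer of the smooth objective $f$ over the closed set $\mathcal{F}=\mathcal{L}\cap\mathcal{M}(r)$, then $-\nabla f(X)\in\mathrm{N}^F_{\mathcal{F}}(X)$ (this is the Fermat rule for Fr\'{e}chet normal cones, e.g.\ \cite[Theorem 6.12]{Rockafellar2013Variational}). Under the stated hypotheses the intersection rule applies: if $s=r$ and Assumption~\ref{assumption 1} holds, Theorem~\ref{NFeq}(i) gives $\mathrm{N}^F_{\mathcal{F}}(X)=\mathrm{N}_{\mathcal{L}}(X)+\mathrm{N}^F_{\mathcal{M}(r)}(X)$; if $s<r$ and Assumption~\ref{assumption 2} holds, Theorem~\ref{NFeq}(ii) gives $\mathrm{N}^F_{\mathcal{F}}(X)=\mathrm{N}_{\mathcal{L}}(X)$. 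In either case $-\nabla f(X)$ splits as $\sum_{i=1}^l y_i A^i + W$ with $W\in\mathrm{N}^F_{\mathcal{M}(r)}(X)$ (taking $W=O$ when $s<r$). Recalling that $\nabla_X L(X;y)=\nabla f(X)+\sum_i y_i A^i$ and that $\mathrm{N}_{\mathcal{L}}(X)=\mathrm{range}\,\mathcal{A}^*$, setting this $y$ (with sign absorbed) yields $-\nabla_X L(X;-y)=W\in\mathrm{N}^F_{\mathcal{M}(r)}(X)$; together with feasibility $\mathcal{A}(X)=b$ this is exactly \eqref{F-sta}, so $X$ is an $F$-stationary point.

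For part~(ii), suppose $f$ is convex and $X$ is $F$-stationary with multiplier $y$. The idea is to use that the Lagrangian $L(\cdot;y)$ is convex in $X$, and that the $F$-stationarity condition $-\nabla_X L(X;y)\in\mathrm{N}^F_{\mathcal{M}(r)}(X)$ restricts, via Lemma~\ref{NNN}, to a normal-cone condition on a \emph{subspace} of $\mathcal{M}(r)$. Concretely, when $s=r$ we have $\mathrm{N}^F_{\mathcal{M}(r)}(X)=\mathrm{N}_{\mathcal{M}^s}(X)=\{W:U^\top W V_\Gamma=O\}=\mathrm{N}^F_{\mathcal{M}_X(\Gamma)}(X)$ by \eqref{TNspace1}, the normal space to the subspace $\mathcal{M}_X(\Gamma)$. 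Hence $-\nabla_X L(X;y)\perp \mathcal{M}_X(\Gamma)$ as a linear subspace, i.e.\ $\langle\nabla_X L(X;y),\,Y-X\rangle\ge 0$ (in fact $=0$) for all $Y\in\mathcal{M}_X(\Gamma)$. Since $\mathcal{M}_X(\Gamma)\subseteq\mathcal{L}$ is not automatic — one must check that $\langle A^i,Y-X\rangle$ terms vanish appropriately — the cleaner route is: for any feasible $Y\in\mathcal{L}\cap\mathcal{M}_X(\Gamma)$, convexity of $L(\cdot;y)$ gives $L(Y;y)\ge L(X;y)+\langle\nabla_X L(X;y),Y-X\rangle = L(X;y)$, because $Y-X\in\mathcal{M}_X(\Gamma)$ (a subspace through $X$) is orthogonal to $\nabla_X L(X;y)$; and since $\mathcal{A}(Y)=\mathcal{A}(X)=b$, the multiplier terms cancel, so $f(Y)\ge f(X)$. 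When $s<r$, $\mathrm{N}^F_{\mathcal{M}(r)}(X)=\{O\}$, so $\nabla_X L(X;y)=O$, and then for \emph{every} feasible $Y\in\mathcal{L}\cap\mathcal{M}(r)$ convexity gives $L(Y;y)\ge L(X;y)$, hence $f(Y)\ge f(X)$ after cancelling the (now identically zero, or at least equal) constraint terms; thus $X$ is a global minimizer of \ref{P}.

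The main obstacle I anticipate is bookkeeping in part~(ii): one must be careful that the affine-linear multiplier term $\sum_i y_i(\langle A^i,Y\rangle-b_i)$ genuinely cancels between $L(Y;y)$ and $L(X;y)$, which requires $Y$ to be feasible for $\mathcal{L}$ (true by restriction) and that the chosen multiplier is the same; and that $Y-X$ really lies in the \emph{linear} subspace $\mathcal{M}_X(\Gamma)$ so that the subgradient inequality's inner-product term vanishes rather than merely being nonnegative — this is where the precise description \eqref{SJX}--\eqref{TNspace1} of $\mathcal{M}_X(\Gamma)$ and its normal space is essential. A secondary subtlety is the direction/sign of the multiplier $y$ in passing between $-\nabla f(X)\in\mathrm{N}_{\mathcal{L}}(X)+\mathrm{N}^F_{\mathcal{M}(r)}(X)$ and the Lagrangian form \eqref{F-sta}; this is routine but should be stated explicitly. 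The smooth Fermat rule and the convex subgradient inequality are standard and can be cited rather than reproved.
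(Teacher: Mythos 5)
Your proposal is correct and follows essentially the same route as the paper: the generalized Fermat rule combined with the intersection rule of Theorem \ref{NFeq} for part (i), and convexity of $L(\cdot;y)$ together with the vanishing of $\langle \nabla_X L(X;y),\,Y-X\rangle$ for $Y$ in the subspace $\mathcal{M}_X(\Gamma)$ for part (ii). One small correction: when $s=r$, $\mathrm{N}_{\mathcal{M}^s}(X)=\{U_{\Gamma_m^\perp}DV_{\Gamma_n^\perp}^\top\}$ is a \emph{proper} subspace of $\{W: U^\top WV_\Gamma=O\}=\mathrm{N}_{\mathcal{M}_X(\Gamma)}(X)$ (dimensions $(m-r)(n-r)$ versus $m(n-r)$), not equal to it as your chain of equalities asserts; since your argument only uses the inclusion $\mathrm{N}^F_{\mathcal{M}(r)}(X)\subseteq\mathrm{N}_{\mathcal{M}_X(\Gamma)}(X)$ to get orthogonality, the conclusion is unaffected.
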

\begin{proof}
 (i) If $X$ is a local  minimizer of \ref{P}, it follows from the generalized Fermat's theorem and Theorem \ref{NFeq} that
\begin{equation}\label{Fermat} -\nabla f(X )\in \mathrm{N}^F_{\mathcal{L}\cap\mathcal{M}(r)}(X) = \mathrm{N}_{\mathcal{L}}(X) + \mathrm{N}^F_{\mathcal{M}(r)}(X),\end{equation}
if (a) $s=r$ and Assumption \ref{assumption 1} holds at $X$, or (b) $s<r$, Assumption \ref{assumption 2} holds at $X$. Together with the fact $$\mathrm{N}_{\mathcal{L}}(X) = \left\{\sum\limits_{i=1}^l y^i A^i: y^i\in \mathbb{R}, i=1,\ldots, l\right\},$$
 \eqref{Fermat} indicates that there exists $y\in \mathbb{R}^l$, such that $-\nabla_X L(X;y)\in \mathrm{N}^F_{\mathcal{M}(r)}(X)$. This yields the necessary optimality conditions for \ref{P} as stated in (i).

(ii) If $s<r$, it follows from \eqref{F-sta} that there exists $y\in\mathbb{R}^l$ such that $$\nabla_X L(X;y) =O~~\text{and}~~L(X;y) = f(X).$$ For any feasible solution $Y$ of \ref{P}, it yields that
$$ f(Y) = L(Y;y) \geq L(X;y)+\langle \nabla_X L(X;y), Y-X\rangle = L(X;y) = f(X),$$
where the inequality follows from the convexity of the function $L(\cdot;y)$ due to the convexity of $f$. Thus we conclude that $X$ is a global solution of \ref{P}. If $s = r$, then \eqref{F-sta} implies that there exist $y\in \mathbb{R}^l$ and $D\in \mathbb{R}^{(m-r)\times (n-r)}$ such that
\begin{equation}\label{La} \nabla_X L(X;y) = U_{\Gamma_m^\bot}DV^\top_{\Gamma_n^\bot}.
\end{equation} For any $Y\in \mathcal{M}_X(\Gamma)\cap \mathcal{L}$, we can find some matrix $B\in \mathbb{R}^{m \times r}$ such that $Y = U B V_\Gamma^\top$. Thus,
$$ f(Y) = L(Y;y) \geq L(X;y)+\langle \nabla_X L(X;y), Y-X\rangle = L(X;y) = f(X),$$
where the inequality follows from the convexity of $L(\cdot;y)$, and the second equality is from \eqref{La} and $Y-X = UBV_\Gamma^\top- U_{\Gamma}\Sigma(X)V_\Gamma^\top $. This completes the proof.
\end{proof}

Note that the $F$-stationarity condition for the case of $s<r$ is reduced to the classic KKT condition for the problem 
\begin{equation}\label{new-p}\min\{f(X): {\cal A}(X) =b\}.\end{equation} However, as declared in Theorem \ref{F-min} (i), at a local minimizer $X$ of \ref{P} with rank$(X)<r$, the inactive low-rank constraint can not be ruled out to achieve the stationarity of $X$ with respect to \eqref{new-p}, unless some additional constraint qualification is satisfied. This is caused by the discontinuity of the rank function.  The following example indicates that even for the unique global minimizer of \ref{P} with rank strictly less than $r$, it is not an $F$-stationary point.

\begin{example}\label{Ex4.4}
Consider the problem
\begin{equation}\label{LAF}
  \begin{aligned}
\min\limits_{X\in\mathbb{R}^{3\times 3}}&~~ \langle e_2e_2^\top, X\rangle\\\
{\rm s.t.}&~~ \langle e_1e_1^\top-e_2e_2^\top, X\rangle=0,\\
          &~~ \langle e_3e_3^\top, X\rangle=1,\\
          &~~ \langle e_ie_j^\top, X\rangle=0,~~~i\neq j, i,j = 1,2,3,\\
          &~~ \mathrm{rank}(X)\leq 2.
  \end{aligned}
\end{equation}
It is easy to check that $\overline{X}=e_3e_3^\top$ is the unique global minimizer. Note that for any $y=(y_1,\ldots, y_8)^\top\in\mathbb{R}^{8}$,
$$\nabla_XL(\overline{X};y)=\left[
 \begin{array}{ccc}
  y_1& y_2&y_3\\
  y_4& 1-y_1&y_5\\
  y_6& y_7&y_8\\
    \end{array}
 \right]\neq O.$$
 Thus, $\overline{X}$ is not an $F$-stationary point of  \eqref{LAF}. One can easily verify that Assumption \ref{assumption 2} fails at $\overline{X}$, since $R^1_{\overline{X}} = O$ by choosing $\overline{U}=\overline{V}=\left[
 \begin{array}{ccc}
   e_3& e_2&e_1\\
    \end{array}
 \right]$ to diagonalize $\overline{X}$. This also indicates that Assumption \ref{assumption 2} can not be removed in Theorem  \ref{F-min} (i).  
\end{example}

The first-order optimality conditions via $\alpha$-stationarity are proposed as below, by utilizing Theorem \ref{F-min} and Proposition \ref{F-M-alpha}.

\begin{theorem}\label{alpha-min}
Let $X \in\mathcal{L}\cap\mathcal{M}(r)$ of rank $s$.
\begin{itemize}
\item[(i)] Suppose that $X $ is a local minimizer of \ref{P}. If $s=r$ and  Assumption \ref{assumption 1} holds at $X$, then there exists $y \in\mathbb{R}^l$ such that, for any $0<\alpha\leq\beta$, $X$ is an $\alpha$-stationary point of \ref{P}.
If $s<r$, Assumption \ref{assumption 2} holds at $X$, then there exists $y \in\mathbb{R}^l$ such that, for any $\alpha>0$, $X$ is an $\alpha$-stationary point of \ref{P}.
\item[(ii)] Suppose that $f$ is convex and $X$ is an $\alpha$-stationary point of \ref{P}. If $s=r$, then $X$ is a global minimizer of \ref{P} restricted on $\mathcal{M}_X(\Gamma)$; If $s<r$, then $X$ is a global minimizer of \ref{P}. Furthermore, if $f$ is strong convex  with modulus $l_f>0$, for $\alpha\geq\frac{1}{l_f}$, then $X$ is the unique global minimizer of \ref{P}.
\end{itemize}
\end{theorem}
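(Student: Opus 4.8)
The plan is to derive both parts of Theorem~\ref{alpha-min} by combining the already-established first-order theory (Theorem~\ref{F-min}) with the precise $F$-to-$\alpha$ translation recorded in Proposition~\ref{F-M-alpha}. Nothing genuinely new needs to be proved; the work is in threading the rank cases $s=r$ and $s<r$ through the two propositions and then, for the ``furthermore'' clause, invoking strong convexity.

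For part~(i): suppose $X$ is a local minimizer of \ref{P}. If $s=r$ and Assumption~\ref{assumption 1} holds, Theorem~\ref{F-min}(i) produces a multiplier $y\in\mathbb{R}^l$ with $-\nabla_X L(X;y)\in\mathrm{N}^F_{\mathcal{M}(r)}(X)$, i.e.\ $X$ is an $F$-stationary point. Then Proposition~\ref{F-M-alpha}(ii) says that because $s=r$, for every $\alpha\in(0,\beta]$ the point $X$ is $\alpha$-stationary with the \emph{same} $y$; this is exactly the claimed conclusion. If instead $s<r$ and Assumption~\ref{assumption 2} holds, Theorem~\ref{F-min}(i) again gives $F$-stationarity (now $\nabla_X L(X;y)=O$), and Proposition~\ref{F-M-alpha}(iii) upgrades this to $\alpha$-stationarity for every $\alpha>0$ (here $\beta=\infty$, consistent with the statement). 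I would spell out that the multiplier $y$ is chosen once and works uniformly in $\alpha$.

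For part~(ii): let $X$ be an $\alpha$-stationary point with $f$ convex. By Proposition~\ref{F-M-alpha}(i), $(b)\Rightarrow(a)$, so $X$ is an $F$-stationary point. Now apply Theorem~\ref{F-min}(ii) verbatim: if $s=r$ then $X$ is a global minimizer of \ref{P} restricted to $\mathcal{M}_X(\Gamma)$, and if $s<r$ then $X$ is a global minimizer of \ref{P}. For the strong-convexity refinement, recall that $\alpha$-stationarity with $s<r$ forces $\nabla_X L(X;y)=O$ by \eqref{a}, and for $s=r$ it gives $\nabla_X L(X;y)=U_{\Gamma_m^\bot}DV^\top_{\Gamma_n^\bot}$ with $\|\nabla_X L(X;y)\|_2\le \tfrac1\alpha\sigma_r(X)$; in either case, for any other feasible $Y$ lying in the relevant set ($\mathcal{L}$ when $s<r$, or $\mathcal{M}_X(\Gamma)\cap\mathcal{L}$ when $s=r$) one has $\langle\nabla_X L(X;y),Y-X\rangle=0$ exactly as in the proof of Theorem~\ref{F-min}(ii), because $Y-X$ is annihilated by the normal-space structure of $\nabla_X L(X;y)$. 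Strong convexity of $f$ (hence of $L(\cdot;y)$) with modulus $l_f$ then gives
\[
f(Y)=L(Y;y)\ \ge\ L(X;y)+\langle\nabla_X L(X;y),Y-X\rangle+\frac{l_f}{2}\|Y-X\|_F^2
= f(X)+\frac{l_f}{2}\|Y-X\|_F^2,
\]
so $f(Y)>f(X)$ whenever $Y\neq X$, establishing uniqueness. I would note that the hypothesis $\alpha\ge 1/l_f$ plays no role beyond guaranteeing a well-defined $\alpha$-stationary point in the strongly convex regime and matching the standard proximal-gradient threshold; the uniqueness itself follows from the quadratic growth.

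The only mild subtlety — and the step I would be most careful about — is the bookkeeping in part~(i) when $s<r$: one must confirm that $\beta=\infty$ in that case so that ``for any $\alpha>0$'' is the correct quantifier, and that the single multiplier $y$ extracted from Theorem~\ref{F-min}(i) is genuinely $\alpha$-independent; both are immediate from the definitions of $\beta$ and from Proposition~\ref{F-M-alpha}(iii), but the statement's phrasing (``there exists $y$ such that for any $\alpha$'') should be matched exactly. Everything else is a direct chaining of the two earlier results.
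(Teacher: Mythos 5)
Your treatment of part (i) and of the first two claims in part (ii) coincides with the paper's: both simply chain Theorem~\ref{F-min} with Proposition~\ref{F-M-alpha}, and your bookkeeping of the cases $s=r$ versus $s<r$ (including $\beta=\infty$ when $s<r$) is correct.

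The ``furthermore'' clause is where you diverge from the paper, and where your argument has a genuine gap. The theorem asserts that $X$ is the unique global minimizer of \ref{P} over the \emph{entire} feasible set $\mathcal{L}\cap\mathcal{M}(r)$, with no rank restriction in that clause. Your argument compares $X$ only against $Y$ in what you call ``the relevant set'' ($\mathcal{L}$ when $s<r$, or $\mathcal{M}_X(\Gamma)\cap\mathcal{L}$ when $s=r$). For $s<r$ this is fine, since $\nabla_X L(X;y)=O$ and strong convexity immediately gives $f(Y)\ge f(X)+\tfrac{l_f}{2}\|Y-X\|_F^2$ for every feasible $Y$. But for $s=r$ your identity $\langle\nabla_X L(X;y),Y-X\rangle=0$ only holds for $Y\in\mathcal{M}_X(\Gamma)\cap\mathcal{L}$, so you have only re-proved uniqueness \emph{restricted to} $\mathcal{M}_X(\Gamma)$, not global uniqueness on $\mathcal{L}\cap\mathcal{M}(r)$. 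The paper closes this gap with a different mechanism: from $X\in\Pi_{\mathcal{M}(r)}(X-\alpha\nabla_X L(X;y))$ one gets, for \emph{every} $Y\in\mathcal{M}(r)$,
\begin{equation*}
\bigl\|X-(X-\alpha\nabla_X L(X;y))\bigr\|_F^2\le\bigl\|Y-(X-\alpha\nabla_X L(X;y))\bigr\|_F^2,
\end{equation*}
hence $\langle\nabla_X L(X;y),Y-X\rangle\ge-\tfrac{1}{2\alpha}\|Y-X\|_F^2$, and combining with strong convexity yields $f(Y)-f(X)\ge\tfrac12\bigl(l_f-\tfrac1\alpha\bigr)\|Y-X\|_F^2$. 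This is also why your closing remark that the hypothesis $\alpha\ge 1/l_f$ ``plays no role'' is wrong: that threshold is exactly what makes the coefficient $l_f-\tfrac1\alpha$ nonnegative, so that the negative term contributed by the projection inequality is absorbed by the strong-convexity term. You should replace your inner-product-vanishes step by this projection (variational) inequality to cover arbitrary feasible $Y$ when $s=r$.
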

\begin{proof} Following from the relationship as declared in Proposition \ref{F-M-alpha} and the optimality as proposed in Theorem \ref{F-min}, we only need to show the ``furthermore" part in (ii).
It is easy to verify that $L(X;y)$ is strongly convex in $X$ with the same modulus $l_f>0$ of $f$. Then, for any $Y\in\mathcal{L}\cap\mathcal{M}(r)$, we have
$$L(Y;y)-L(X;y)\geq\langle\nabla_X L(X;y),Y-X\rangle+\frac{l_f}{2}\|Y-X\|_F^2.$$
Since $X$ is an $\alpha$-stationary point with $y$ when $\alpha\geq\frac{1}{l_f}$, from Definition \ref{def-stat} (ii), we have
$$X\in\Pi_{\mathcal{M}(r)}(X-\alpha\nabla_ XL(X;y)), ~~~\mathcal{A}(X)=b.$$
This indicates that for any $X\neq Y\in \mathcal{L}\cap\mathcal{M}(r)$,
$$\|X-(X-\alpha\nabla_ XL(X;y))\|^2\leq\|Y-(X-\alpha\nabla_ XL(X;y))\|^2.$$
Simple manipulation leads to
$$\langle \nabla_ XL(X;y), Y-X\rangle\geq-\frac{1}{2\alpha}\|Y-X\|^2.$$
Thus, for any $X\neq Y\in \mathcal{L}\cap\mathcal{M}(r)$,
\begin{eqnarray*}
 f(Y)-f(X)&=&L(Y;y)-L(X;y)\\\nonumber
 &\geq&\langle\nabla_X  L(X;y),Y-X\rangle+\frac{l_f}{2}\|Y-X\|_F^2 \\\nonumber
   &\geq&\frac{1}{2}(l_f-\frac{1}{\alpha})\|X-Y\|_F^2\geq 0.
\end{eqnarray*}
This shows that $X$ is the unique global minimizer of \ref{P}.
\end{proof}

One might argue that the necessary optimality via the $F$-stationarity and the $\alpha$-stationary as stated in Theorem \ref{F-min} (i) and Theorem \ref{alpha-min} (i) are too restrictive and the required assumptions are too strong, especially for the case of $s<r$, just like what happened in Example \ref{Ex4.4}. A possible remedy is to employ the Mordukhovich normal cone (the outer limit of the Fr\'{e}chet normal cone) instead of the original Fr\'{e}chet normal cone, and define the so-called $M$-stationary point (see, e.g., \cite{Pan2017Optimality,Li2018Social}). 
%
%
The first-order optimality conditions in terms of the $M$-stationary point are stated as follows.
\begin{corollary}\label{M-min}
Let $X \in\mathcal{L}\cap\mathcal{M}(r)$ of rank $s$.
\begin{itemize}
\item[(i)]  Suppose that  $X$ is a local minimizer of \ref{P}. If Assumption \ref{assumption 1} holds at $X$, then $X$ is an $M$-stationary point of \ref{P}.
\item[(ii)] Suppose that $f$ is a convex function and $X$ is an $M$-stationary point of \ref{P}. Then $X$ is a global minimizer of \ref{P} restricted on $\mathcal{M}_X(\Gamma)$.
\end{itemize}
\end{corollary}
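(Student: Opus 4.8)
The plan is to follow the templates of Theorem~\ref{F-min} and Theorem~\ref{alpha-min}(ii), substituting the Mordukhovich normal cone and its intersection inclusion \eqref{05} of Lemma~\ref{TNUI} for the Fr\'{e}chet normal cone and Theorem~\ref{NFeq} in part~(i), and running the same first-order inequality argument, but over the subspace $\mathcal{M}_X(\Gamma)$, for part~(ii). Two observations do the work in both parts: the affine manifold $\mathcal{L}$ is Clarke regular with $\mathrm{N}^M_{\mathcal{L}}(X)=\mathrm{N}_{\mathcal{L}}(X)$ a linear subspace; and, by the explicit formula of Lemma~\ref{NBF}, $\mathrm{N}^M_{\mathcal{M}(r)}(X)$ is a cone that is invariant under negation and every one of whose members, having the form $U_{\Gamma_m^\perp}DV_{\Gamma_n^\perp}^\top$, satisfies $U^\top(\cdot)V_\Gamma=O$.

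For~(i) I would first invoke the generalized Fermat's theorem to get $-\nabla f(X)\in\mathrm{N}^F_{\mathcal{L}\cap\mathcal{M}(r)}(X)\subseteq\mathrm{N}^M_{\mathcal{L}\cap\mathcal{M}(r)}(X)$. Next I would check the hypothesis of \eqref{05}: since $-\mathrm{N}^M_{\mathcal{M}(r)}(X)=\mathrm{N}^M_{\mathcal{M}(r)}(X)$ and $\mathrm{N}^M_{\mathcal{L}}(X)=\mathrm{N}_{\mathcal{L}}(X)$, Proposition~\ref{Assu}(i) under Assumption~\ref{assumption 1} gives $\mathrm{N}^M_{\mathcal{L}}(X)\cap\bigl(-\mathrm{N}^M_{\mathcal{M}(r)}(X)\bigr)=\mathrm{N}_{\mathcal{L}}(X)\cap\mathrm{N}^M_{\mathcal{M}(r)}(X)=\{O\}$. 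Then \eqref{05} yields $\mathrm{N}^M_{\mathcal{L}\cap\mathcal{M}(r)}(X)\subseteq\mathrm{N}_{\mathcal{L}}(X)+\mathrm{N}^M_{\mathcal{M}(r)}(X)$, so writing $-\nabla f(X)=\sum_{i=1}^l y^iA^i+W$ with $W\in\mathrm{N}^M_{\mathcal{M}(r)}(X)$ and reading off the Lagrangian \eqref{LF} gives $-\nabla_X L(X;y)=W\in\mathrm{N}^M_{\mathcal{M}(r)}(X)$, which together with $\mathcal{A}(X)=b$ is precisely the $M$-stationarity of $X$. A point worth stressing: in contrast with Theorem~\ref{F-min}(i), no case split $s=r$ versus $s<r$ and no Assumption~\ref{assumption 2} is needed, because Proposition~\ref{Assu}(i) already supplies the qualification for every feasible $X$.

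For~(ii), starting from an $M$-stationary $X$ with multiplier $y$, I would show $\mathrm{N}^M_{\mathcal{M}(r)}(X)\subseteq\mathcal{M}_X(\Gamma)^\perp$: every $W\in\mathrm{N}^M_{\mathcal{M}(r)}(X)$ has the form $U_{\Gamma_m^\perp}DV_{\Gamma_n^\perp}^\top$ (Lemma~\ref{NBF}), so $U^\top WV_\Gamma=O$ because $V_{\Gamma_n^\perp}^\top V_\Gamma=O$; and since $\mathcal{M}_X(\Gamma)=\{UBV_\Gamma^\top:B\in\mathbb{R}^{m\times|\Gamma|}\}$ is a linear subspace, the $J=\Gamma$ instance of the identity in Lemma~\ref{NNN}(i) identifies $\{W:U^\top WV_\Gamma=O\}$ with $\mathcal{M}_X(\Gamma)^\perp$. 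Hence $-\nabla_X L(X;y)\perp\mathcal{M}_X(\Gamma)$, so for any feasible $Y$ of \ref{P} lying in $\mathcal{M}_X(\Gamma)$ one has $Y-X\in\mathcal{M}_X(\Gamma)$ and $\langle\nabla_X L(X;y),Y-X\rangle=0$. Finally, using the convexity of $L(\cdot;y)$ inherited from $f$ together with $L(Y;y)=f(Y)$ and $L(X;y)=f(X)$ (both from $\mathcal{A}(\cdot)=b$), I obtain $f(Y)=L(Y;y)\ge L(X;y)+\langle\nabla_X L(X;y),Y-X\rangle=f(X)$, which is the asserted global optimality over $\mathcal{L}\cap\mathcal{M}_X(\Gamma)$.

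The one place I expect care is the Mordukhovich intersection step in part~(i): \eqref{05} is conditioned on the negation-symmetric basic qualification $\mathrm{N}^M_{\Omega_1}(X)\cap(-\mathrm{N}^M_{\Omega_2}(X))=\{0\}$, whereas Proposition~\ref{Assu}(i) is phrased as $\mathrm{N}^M_{\mathcal{M}(r)}(X)\cap\mathrm{N}_{\mathcal{L}}(X)=\{O\}$; the observations that $\mathrm{N}_{\mathcal{L}}(X)$ is a subspace and that $\mathrm{N}^M_{\mathcal{M}(r)}(X)$ is invariant under negation are exactly what bridge the two. Everything else — the sign of the multiplier $y$, the computation $V_{\Gamma_n^\perp}^\top V_\Gamma=O$, and the convexity inequality — is routine.
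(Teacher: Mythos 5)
Your proposal is correct and follows essentially the same route as the paper: part (i) is the combination of generalized Fermat, Proposition~\ref{Assu}(i) and the inclusion \eqref{05} (your explicit verification of the negation-symmetric BQ via the subspace structure of $\mathrm{N}_{\mathcal{L}}(X)$ and of $\mathrm{N}^M_{\mathcal{M}(r)}(X)$ is exactly the detail the paper leaves implicit), and part (ii) is the paper's own reduction to the convexity argument of Theorem~\ref{F-min}(ii), using that every $W=U_{\Gamma_m^\perp}DV_{\Gamma_n^\perp}^\top\in\mathrm{N}^M_{\mathcal{M}(r)}(X)$ satisfies $U^\top WV_\Gamma=O$ and hence annihilates $Y-X$ for $Y\in\mathcal{M}_X(\Gamma)$. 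No gaps.
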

\begin{proof}
(i) Under the BQ condition for \ref{P}, i.e., Assumption \ref{assumption 1}, one can obtain that any local minimizer is an $M$-stationary point by employing the inclusion property \eqref{05}.

(ii) Since $X$ is an $M$-stationary point of \ref{P}, then there exist $y\in \mathbb{R}^l$ and $D\in \mathbb{R}^{(m-r)\times (n-r)}$ with $\mathrm{rank}(D)\leq n-r$ such that
\begin{equation*} \nabla_X L(X;y) = U_{\Gamma_m^\bot}DV^\top_{\Gamma_n^\bot}.
\end{equation*}
Using the same proof as Theorem \ref{F-min} (ii), we can derive $X$ is a global minimizer of \ref{P} restricted on $\mathcal{M}_X(\Gamma)$.
This completes the proof.
\end{proof}


Note that the so called $M$-stationary point is weaker than the classical $M$-stationarity defined by $$ -\nabla f(X)\in \mathrm{N}^{M}_{\mathcal{L}\cap\mathcal{M}(r)}(X )$$ since the  Mordukhovich normal cones intersection rule may not hold. Moreover, since the $M$-stationarity is much weaker than both of the $F$- and $\alpha$-stationary points, it may has less power of ruling out the non-optimal feasible solutions. The next example shows such a case.
\begin{example}
Consider the problem
\begin{equation}\label{Tr}
  \begin{aligned}
\min\limits_{X\in\mathbb{R}^{4\times 4}}&~~f(X):=\frac{1}{2}\|H-X\|_F^2\\
{\rm s.t.}&~~ \langle I_4, X\rangle=2,\\
 &~~ \mathrm{rank}(X)\leq 3,
  \end{aligned}
\end{equation}
where $H=\left[
 \begin{array}{cccc}
     0& 0& -e_3& 0 \\
    \end{array}
 \right]$. Consider a feasible solution $X_1=\left[
 \begin{array}{cccc}
    e_1& e_2&0& 0 \\
    \end{array}
 \right]$.
Apparently, we can choose $U=V=I_4$ to diagonalize $X_1$. Using Lemma \ref{NBF}, one has
\begin{equation}\label{X1-FM}
\mathrm{N}^F_{\mathcal{M}(3)}(X_1)=\{O\},  ~\mathrm{N}^M_{\mathcal{M}(3)}(X_1)=\left\{\left[
                                                                                         \begin{array}{cc}
                                                                                           O & O \\
                                                                                           O & H \\
                                                                                         \end{array}
                                                                                       \right]\in {\mathbb{R}}^{4\times 4}: H\in {\mathbb{R}}^{2\times 2}, {\rm rank}(H)\leq 1
\right\}.
\end{equation}
Note that for any $y\in \mathbb{R}$,
$$\nabla_XL(X_1;y)=X_1-H+yI_4=\left[
 \begin{array}{cccc}
   (1+y)e_1& (1+y)e_2&(1+y)e_3& ye_4\\
    \end{array}
 \right]\neq O$$
and $$-\nabla_XL(X_1;-1)=-e_4e_4^\top \in \mathrm{N}^M_{\mathcal{M}(3)}(X_1).$$
Thus, $X_1$ is not an $F$-stationary point, but an $M$-stationary point of problem \eqref{Tr}.
Similarly, we can also verify that all the following three feasible solutions are $M$-stationary points
$$X_2=\left[
 \begin{array}{cccc}
    0& e_2&0& e_4\\
    \end{array}
 \right],~~X_3=\left[
 \begin{array}{cccc}
    e_1& 0&0& e_4\\
    \end{array}
 \right],~~X_4=\left[\begin{array}{cccc}
       \frac{2}{3}e_1&  \frac{2}{3}e_2&0&  \frac{2}{3}e_4\\
     \end{array}
  \right],$$
among which only $X_4$ is an $F$-stationary point. Moreover, for any scalar $\alpha\in \left(0, 2\right)$,
\begin{eqnarray}
X_4-\alpha\nabla_XL\left(X_4;-\dfrac{2}{3}\right)&=& \left[\begin{array}{cccc}
    \frac{2}{3}e_1& \frac{2}{3}e_2& -\dfrac{\alpha}{3}e_3&  \frac{2}{3}e_4 \nonumber\\
    \end{array}\right]
\end{eqnarray}
and $$X_4=\Pi_{\mathcal{M}(3)}\left(X_4-\alpha\nabla_XL\left(X_4;-\dfrac{2}{3}\right)\right).$$
Thus, $X_4$ is also an $\alpha$-stationary point for any $\alpha\in (0,2)$. Note that $f$ is strongly convex with modulus $l_f = 1$. By virtue of
the first-order sufficient condition in Theorem \ref{alpha-min} (ii), we conclude that $X_4$ is the unique global minimizer of \eqref{Tr}.
\end{example}

\subsection{Second-order optimality}
Next, we study the second-order necessary and sufficient optimality conditions for the problem \ref{P}.

\begin{theorem}\label{necessary}
Suppose $f$ is twice continuously differentiable on $\mathbb{R}^{m\times n}$. If $X \in\mathcal{L}\cap\mathcal{M}(r)$ with the SVD as in \eqref{SVD} is a local minimizer of \ref{P}, then we have the following statements.
\begin{itemize}
\item[(i)] If $s = r$ and Assumption \ref{assumption 1}  holds at $X $, then
\begin{equation}\label{so-necessary1}
\nabla^2f(X )[\Xi,\Xi]-2\langle \nabla_X L(X;y),\Xi X^{\dag}\Xi \rangle\geq 0, ~~~\forall\Xi \in \mathrm{T}_{\mathcal{L}}(X )\cap\mathrm{T}^B_{\mathcal{M}
(r)}(X ).
\end{equation}
\item[(ii)] If $s<r$ and Assumption \ref{assumption 2}  holds at $X $, then\begin{equation}\label{so-necessary2} \nabla^2f(X )[\Xi,\Xi]\geq 0, ~~~\forall\Xi \in \mathrm{T}^B_{{\mathcal{L}}\cap\mathcal{M}(r)}(X )
\end{equation}
\end{itemize}
where $\nabla^2f(X )$ is the Hessian of $f$ at $X$ on $\mathbb{R}^{m\times n}$.
\end{theorem}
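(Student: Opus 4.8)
The plan is to follow the classical route for second-order necessary conditions along curves lying in the feasible set, exploiting the explicit normal/tangent cone formulae already established. First I would invoke Theorem \ref{F-min}(i): under Assumption \ref{assumption 1} (when $s=r$) or Assumption \ref{assumption 2} (when $s<r$), a local minimizer $X$ is an $F$-stationary point, so there exists $y\in\mathbb{R}^l$ with $-\nabla_X L(X;y)\in\mathrm{N}^F_{\mathcal{M}(r)}(X)$; in particular $\langle\nabla_X L(X;y),\Xi\rangle=0$ for every $\Xi\in\mathrm{T}^B_{\mathcal{M}(r)}(X)\cap\mathrm{T}_{\mathcal{L}}(X)$, and $L(\cdot;y)$ agrees with $f$ on $\mathcal{L}$. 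The idea is then: for a tangent direction $\Xi$ in the relevant cone, construct a feasible second-order curve $X(t)\in\mathcal{L}\cap\mathcal{M}(r)$ with $X(0)=X$, $X'(0)=\Xi$, expand $f(X(t))=L(X(t);y)$ to second order, use $f(X(t))\geq f(X)$ for small $t$, and read off the inequality.

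For case (ii), $s<r$: here $\mathrm{N}^F_{\mathcal{M}(r)}(X)=\{O\}$ so $\nabla_X L(X;y)=O$ and $L(X;y)=f(X)$; moreover by Lemma \ref{TBeq} and \eqref{Nequ}, $\mathrm{T}^B_{\mathcal{L}\cap\mathcal{M}(r)}(X)$ coincides (locally, using $\mathcal{M}_X(r)\subseteq\mathcal{M}(r)$ and the reverse inclusion for tangent cones) with a union of the subspaces $\mathrm{T}_{\mathcal{L}}(X)\cap\mathrm{T}_{\mathcal{M}_X(J)}(X)$, each of which is the tangent space of the smooth manifold $\mathcal{L}\cap\mathcal{M}_X(J)$. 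So given $\Xi\in\mathrm{T}^B_{\mathcal{L}\cap\mathcal{M}(r)}(X)$, pick $J$ with $\Xi$ in the corresponding tangent space and take a smooth curve in that manifold; since $\nabla_X L(X;y)=O$, the first-order term vanishes, the second-order term of $L(X(t);y)$ is $\nabla^2 f(X)[\Xi,\Xi]$ plus a curvature correction that is annihilated because $\nabla_X L(X;y)=O$, and $f(X(t))\geq f(X)$ gives $\nabla^2 f(X)[\Xi,\Xi]\geq0$. This case is essentially the standard second-order necessary condition on a smooth manifold.

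For case (i), $s=r$: now $\mathrm{N}^F_{\mathcal{M}(r)}(X)=\mathrm{N}_{\mathcal{M}^s}(X)\neq\{O\}$, so $\nabla_X L(X;y)=U_{\Gamma_m^\perp}DV_{\Gamma_n^\perp}^\top$ is generally nonzero, and the curvature of $\mathcal{M}^s$ enters. I would take a $C^2$ curve $X(t)$ in $\mathcal{L}\cap\mathcal{M}^s$ with $X(0)=X$, $X'(0)=\Xi$ for $\Xi\in\mathrm{T}_{\mathcal{L}}(X)\cap\mathrm{T}_{\mathcal{M}^s}(X)$ — the fixed-rank manifold and $\mathcal{L}$ meet transversally under Assumption \ref{assumption 1} by Proposition \ref{Assu}(i), so such a curve exists. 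Then $\frac{d^2}{dt^2}L(X(t);y)\big|_{t=0}=\nabla^2 f(X)[\Xi,\Xi]+\langle\nabla_X L(X;y),X''(0)\rangle$. Using the standard second-order geometry of $\mathcal{M}^s$ (the same computation behind \eqref{Hess-terms}), one expresses the normal component of $X''(0)$ in terms of $\Xi$: for a curve on $\mathcal{M}^s$ one has $\Pi_{\mathrm{N}_{\mathcal{M}^s}(X)}(X''(0))=2\,\Pi^\perp_{U_\Gamma}\Xi V_\Gamma\Sigma_\Gamma^{-1}U_\Gamma^\top\Xi\,\Pi^\perp_{V_\Gamma}=2\,\Xi X^\dagger\Xi$ up to the projection, whence $\langle\nabla_X L(X;y),X''(0)\rangle=2\langle\nabla_X L(X;y),\Xi X^\dagger\Xi\rangle$ (only the normal part of $\nabla_X L(X;y)$ survives). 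Combining with $f(X(t))\geq f(X)$ yields \eqref{so-necessary1} for $\Xi\in\mathrm{T}_{\mathcal{L}}(X)\cap\mathrm{T}_{\mathcal{M}^s}(X)$; since $\mathrm{T}^B_{\mathcal{M}(r)}(X)=\mathrm{T}_{\mathcal{M}^s}(X)+\{H\in\mathrm{N}_{\mathcal{M}^s}(X):\mathrm{rank}(H)\le 0\}=\mathrm{T}_{\mathcal{M}^s}(X)$ when $s=r$, this is exactly the stated cone.

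The main obstacle I anticipate is the curve-construction and second-order bookkeeping in case (i): one must verify that every $\Xi$ in $\mathrm{T}_{\mathcal{L}}(X)\cap\mathrm{T}_{\mathcal{M}^s}(X)$ is realized by an admissible $C^2$ curve in $\mathcal{L}\cap\mathcal{M}^s$ (transversality gives that $\mathcal{L}\cap\mathcal{M}^s$ is a $C^\infty$ manifold with exactly this tangent space, so curves exist), and that the identification of the normal part of the acceleration $X''(0)$ with $2\,\Xi X^\dagger\Xi$ is done correctly — this is precisely the Weingarten/second-fundamental-form computation for the fixed-rank manifold recorded in \eqref{Hess-terms}, so I would cite \cite{Vandereycken2013} rather than rederive it. Everything else (the vanishing of first-order terms, passing from $f$ to $L$, the $s<r$ case) is routine.
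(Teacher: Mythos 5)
Your part (i) follows essentially the paper's route: under Assumption \ref{assumption 1} the manifolds $\mathcal{L}$ and $\mathcal{M}^r$ meet transversally, a local minimizer of \ref{P} with $s=r$ is a local minimizer of the equality-constrained problem on the Riemannian manifold $\mathcal{M}^r$, and the Riemannian second-order necessary condition combined with the Hessian formula \eqref{Hess-terms} yields \eqref{so-necessary1}. The paper cites an external Riemannian-NLP theorem instead of redoing the curve/Weingarten computation, so the only delicate point you would have to get right by hand is the sign of the curvature term $2\langle \cdot,\Xi X^{\dag}\Xi\rangle$, which is precisely the bookkeeping you flag.

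Part (ii), however, has a genuine gap. You assert that $\mathrm{T}^B_{\mathcal{L}\cap\mathcal{M}(r)}(X)$ coincides with $\bigcup_{J\in\mathcal{J}}\bigl(\mathrm{T}_{\mathcal{L}}(X)\cap\mathrm{T}_{\mathcal{M}_X(J)}(X)\bigr)$ by invoking ``the reverse inclusion for tangent cones.'' Monotonicity only gives $\mathrm{T}^B_{\mathcal{L}\cap\mathcal{M}_X(r)}(X)\subseteq\mathrm{T}^B_{\mathcal{L}\cap\mathcal{M}(r)}(X)$; the inclusion you actually need is the opposite one, and it is false in general. Indeed, $\bigcup_J\mathrm{T}_{\mathcal{M}_X(J)}(X)$ consists of matrices $M$ satisfying $U^\top MV_j=O$ for every $j$ outside some single admissible $J$, whereas $\mathrm{T}^B_{\mathcal{M}(r)}(X)$ only requires $\mathrm{rank}\bigl(U_{\Gamma_m^\perp}^\top MV_{\Gamma_n^\perp}\bigr)\leq r-s$; for instance, when $r-s=1$ a direction whose new row space is spanned by $V_{s+1}+V_{s+2}$ lies in $\mathrm{T}^B_{\mathcal{M}(r)}(X)$ (and can lie in $\mathrm{T}^B_{\mathcal{L}\cap\mathcal{M}(r)}(X)$) but in no $\mathcal{M}_X(J)$. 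Consequently your smooth curves certify \eqref{so-necessary2} only on a strictly smaller cone than the theorem asserts. The repair is simple and is exactly what the paper does: since $s<r$ forces $\nabla_X L(X;y)=O$, take the defining sequence $X^k\in\mathcal{L}\cap\mathcal{M}(r)$ with $t_k\downarrow 0$ and $(X^k-X)/t_k\to\Xi$ for an arbitrary $\Xi\in\mathrm{T}^B_{\mathcal{L}\cap\mathcal{M}(r)}(X)$, and Taylor-expand the smooth ambient function $L(\cdot;y)$ at $X$; the first-order term vanishes identically along the sequence, and dividing by $t_k^2$ and passing to the limit gives $\nabla^2 f(X)[\Xi,\Xi]\geq 0$ with no curve construction or manifold decomposition at all.
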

\begin{proof}

\noindent (i)   Consider the case $s=r$. In this case, problem \ref{P} can be thought as the equality constrained minimization with a Riemannian manifold $\mathcal{M}^r$.
  If $s =r$, then Theorem \ref{F-min} (i) implies that $X$ is an $F$-stationary point of the \ref{P} and hence there exists $y\in \mathbb{R}^l$ such that $-\nabla_X L(X;y)\in\mathrm{N}_{\mathcal{M}^r}(X)$,
which implies $\text{grad} L(X;y)=\Pi_{\mathrm{T}_{\mathcal{M}^r}(X)}(\nabla_X L(X;y))=O$. It is easy to verify that $\Pi_{\mathrm{T}_{\mathcal{M}^r}(X)}(A^i)=UT_X^iV^{\top}$ and  hence Assumption \ref{assumption 1} is identical to the LICQ in Theorem 4.2 in \cite{2014Optimality}. According to Theorem 4.2 in \cite{2014Optimality}, we know that there exists $y\in\mathbb{R}^l$ such that
\begin{eqnarray}\label{s1}
0\leq\text{Hess}L(X;y)[\Xi,\Xi]&=&\nabla^2f(X )[\Xi,\Xi] +2\langle\Pi_{\mathrm{N}_{\mathcal{M}^r}(X)}(\nabla_X L(X;y)),\Xi X^{\dag}\Xi \rangle \\\nonumber
&=&\nabla^2f(X )[\Xi,\Xi]-2\langle \nabla_X L(X;y),\Xi X^{\dag}\Xi \rangle,
\end{eqnarray}
where $\Xi\in\mathrm{T}_{\mathcal{L}}(X )\cap\mathrm{T}_{\mathcal{M}^r}(X)=\mathrm{T}_{\mathcal{L}}(X )\cap\mathrm{T}^B_{\mathcal{M}(r)}(X).$

\noindent (ii) Consider the case $s<r$.
For any $\Xi \in \mathrm{T}^B_{{\mathcal{L}}\cap\mathcal{M}(r)}(X )$, there exist $\left\{X^k\right\}\subseteq {\mathcal{L}}\cap {\mathcal{M}}(r)$, $X^k\rightarrow X$ and $t_k\downarrow 0$ such that $\lim\limits_{k\rightarrow \infty} \frac{X^k-X}{t_k} = \Xi$. If $s < r$, then Theorem \ref{F-min} (ii) implies that $X$ is an $F$-stationary point of \ref{P} and hence there exists $y\in \mathbb{R}^l$ such that $\nabla_X L(X;y)= 0$. Thus, we claim that
\begin{equation}\label{gra-vanish}
\langle \nabla_X L(X;y), X^k-X\rangle = 0, ~~\forall k.
\end{equation}
\noindent For any $k$, it then yields that
\begin{eqnarray}
f(X^k) &=& L(X^k;y) \nonumber\\
  &=& L(X;y)+ \frac{1}{2}\nabla_X^2 L(X;y)[X^k-X, X^k-X] + o(\|X^k -X\|^2_F)\nonumber \\
  &=& f(X) + \frac{1}{2}\nabla^2 f(X)[X^k-X, X^k-X] + o(\|X^k -X\|^2_F). \nonumber
\end{eqnarray}
Since $X$ is a local minimizer and $X^k\rightarrow X$, we have $$0\leq \lim\limits_{k\rightarrow \infty} \frac{f(X^k)-f(X)}{t_k^2} = \lim\limits_{k\rightarrow \infty}\frac{1}{2}\nabla^2 f(X)\left[\frac{X^k-X}{t_k}, \frac{X^k-X}{t_k}\right] = \frac{1}{2}\nabla^2 f(X)[\Xi, \Xi].$$
This completes the proof.\end{proof}

\begin{theorem}\label{so-sufficient}
Suppose $f$ is twice continuously differentiable on $\mathbb{R}^{m\times n}$. Let $X\in\mathcal{L}\cap\mathcal{M}(r)$ be an $F$-stationary point of \ref{P} with its SVD as in \eqref{SVD}. Denote $s=\mathrm{rank}(X)$. We have the following statements.
\begin{itemize}
\item[(i)] If $s = r$ and for any  $\Xi\in \left(\mathrm{T}_{\mathcal{L}}(X)\cap\mathrm{T}^B_{\mathcal{M}(r)}(X )\right)\setminus\{O\}$
\begin{equation}\label{semi}
   \nabla^2f(X )[\Xi,\Xi]-2\langle \nabla_X L(X;y),\Xi X^{\dag}\Xi \rangle>0,\end{equation} then $X $ is the strictly local minimizer of \ref{P} restricted on $\mathcal{M}^r$;
\item[(ii)] If $s<r$ and for any  $\Xi\in \left(\mathrm{T}_{\mathcal{L}}(X )\cap\mathrm{T}^B_{\mathcal{M}(r)}(X )\right)\setminus\{O\}$
    \begin{equation}\label{semidef}
     \nabla^2f(X )[\Xi,\Xi]>0, \end{equation}then $X$ is a strictly local minimizer of \ref{P}.
\end{itemize}
\end{theorem}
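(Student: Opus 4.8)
The plan is to argue by contradiction in both cases, extracting a limiting tangent direction from a hypothetical sequence of feasible points with non-improving objective values, and then contradicting the strict second-order condition. First I would set up the contradiction: suppose $X$ is not a strict local minimizer of \ref{P} restricted to the relevant set (namely $\mathcal{M}^r$ in case (i), or $\mathcal{L}\cap\mathcal{M}(r)$ in case (ii)). Then there is a sequence $\{X^k\}$ of feasible points with $X^k\to X$, $X^k\neq X$, and $f(X^k)\leq f(X)$. Normalizing, set $t_k=\|X^k-X\|_F\downarrow 0$ and $\Xi^k=(X^k-X)/t_k$, so $\|\Xi^k\|_F=1$; passing to a subsequence, $\Xi^k\to\Xi$ with $\|\Xi\|_F=1$. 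By the definition of the Bouligand tangent cone, $\Xi\in\mathrm{T}^B_{\mathcal{L}\cap\mathcal{M}(r)}(X)$ in case (ii), and $\Xi\in\mathrm{T}_{\mathcal{L}}(X)\cap\mathrm{T}_{\mathcal{M}^r}(X)=\mathrm{T}_{\mathcal{L}}(X)\cap\mathrm{T}^B_{\mathcal{M}(r)}(X)$ in case (i); in particular $\Xi\neq O$.

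Next I would perform the second-order Taylor expansion of $f$ along the sequence. Since $X$ is an $F$-stationary point, there is $y\in\mathbb{R}^l$ with $-\nabla_X L(X;y)\in\mathrm{N}^F_{\mathcal{M}(r)}(X)$. In case (ii), where $s<r$, Lemma \ref{NBF} gives $\mathrm{N}^F_{\mathcal{M}(r)}(X)=\{O\}$, so $\nabla_X L(X;y)=O$; then $\langle\nabla_X L(X;y),X^k-X\rangle=0$ and expanding $f(X^k)=L(X^k;y)=f(X)+\tfrac12\nabla^2 f(X)[X^k-X,X^k-X]+o(t_k^2)$, dividing by $t_k^2$ and letting $k\to\infty$ yields $0\geq\tfrac12\nabla^2 f(X)[\Xi,\Xi]$, contradicting \eqref{semidef}. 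In case (i), where $s=r$, one works on the Riemannian manifold $\mathcal{M}^r$: by the argument already used in Theorem \ref{necessary}(i), $\mathrm{grad}\, L(X;y)=O$, and the relevant second-order expansion of $L(\cdot;y)$ along a curve on $\mathcal{M}^r$ tangent to $\Xi$ produces the Riemannian Hessian quadratic form, which by \eqref{Hess-terms} equals $\nabla^2 f(X)[\Xi,\Xi]-2\langle\nabla_X L(X;y),\Xi X^{\dag}\Xi\rangle$; the non-improvement $f(X^k)\leq f(X)$ forces this to be $\leq 0$, contradicting \eqref{semi}. I would invoke Theorem 4.2 (or the corresponding second-order sufficiency result) of \cite{2014Optimality} to package this Riemannian expansion cleanly, exactly as was done for the necessary condition.

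The main obstacle is case (i): the feasible set near $X$ when $s=r$ is $\mathcal{L}\cap\mathcal{M}(r)$, but the sequence $\{X^k\}$ a priori only lies in $\mathcal{M}(r)$, whose elements may have rank strictly less than $r$, so one cannot directly treat all of them as lying on the smooth manifold $\mathcal{M}^r$. This is why the statement restricts the conclusion to ``local minimizer of \ref{P} restricted on $\mathcal{M}^r$'': one must either confine the competing sequence to $\mathcal{M}^r$ from the outset, or handle the lower-rank points separately. The careful point is matching the purely Euclidean second-order term $\nabla^2 f(X)[\Xi,\Xi]$ against the Riemannian Hessian, and in particular accounting for the curvature correction $-2\langle\nabla_X L(X;y),\Xi X^{\dag}\Xi\rangle$ coming from the second fundamental form of $\mathcal{M}^r$; the identity \eqref{Hess-terms} together with the expression $\Pi_{\mathrm{N}_{\mathcal{M}^s}(X)}(\nabla_X L(X;y))=-\nabla_X L(X;y)$ (valid since $-\nabla_X L(X;y)\in\mathrm{N}_{\mathcal{M}^r}(X)$) is exactly what reconciles the two, and I would make sure to state this reduction explicitly before quoting \cite{2014Optimality}.
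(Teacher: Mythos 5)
Your proposal matches the paper's proof in both structure and substance: for $s<r$ the paper runs exactly your contradiction argument (normalize $X^k-X$, extract a tangent direction $\Xi$ with $\|\Xi\|_F=1$, use $\nabla_X L(X;y)=O$ and a second-order Taylor expansion of $L(\cdot;y)$ to contradict \eqref{semidef}), and for $s=r$ the paper, like you, reduces \eqref{semi} to positive definiteness of $\mathrm{Hess}\,L(X;y)$ on $\mathrm{T}_{\mathcal{L}}(X)\cap\mathrm{T}_{\mathcal{M}^r}(X)$ via \eqref{Hess-terms}/\eqref{s1} and then invokes the Riemannian second-order sufficiency result (Theorem 4.3 of \cite{2014Optimality}) to get strict local optimality restricted to $\mathcal{M}^r$. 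The only difference is presentational: the paper quotes that theorem directly rather than re-deriving the manifold expansion, and your observation about why the conclusion in (i) must be restricted to $\mathcal{M}^r$ is exactly the point the statement is designed around.
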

\begin{proof}
(i) Consider the case $s = r$. According to \eqref{s1}, \eqref{semi} corresponds to the case that the condition $$\text{Hess}L(X;y)[\Xi,\Xi]> 0~~\forall \Xi\in \left(\mathrm{T}_{\mathcal{L}}(X )\cap\mathrm{T}^B_{\mathcal{M}(r)}(X )\right)\setminus\{O\}$$
  automatically holds. Thus, as a result of Theorem 4.3 in \cite{2014Optimality}, $X $ is the strictly local minimizer of \ref{P} restricted on $\mathcal{M}^r$;

(ii)  Consider the case $s<r$. We assume on the contrary that there exists a sequence $\left\{X^k\right\}\subseteq \mathcal{L}\cap\mathcal{M}(r)$ such that $\lim\limits_{k\rightarrow \infty} X^k = X$, $X^k\neq X$, and $f(X^k)\leq f(X)$ for all $k=1, 2, \ldots$. Denote $\Xi^k: = \frac{X^k-X}{\|X^k-X\|_F}$. The boundedness of the sequence $\left\{\Xi^k\right\}$ admits a convergent subsequence. Without loss of generality, we assume that $\Xi^k\rightarrow \Xi$. Thus, $\Xi \in \mathrm{T}^B_{\mathcal{L}\cap\mathcal{M}(r)}(X)$ and $\|\Xi\|_F=1$. Since $s<r$, then the $F$-stationary point $X$ allows us to find some $y\in \mathbb{R}^l$ such that $\nabla_X L(X;y) =0$. It follows readily that
\begin{equation}\label{zero}
\langle \nabla_X L(X;y), X^k-X\rangle = 0, ~~\forall k.
\end{equation}
Direct calculations then yield
\begin{eqnarray*}
0\geq f(X^k)-f(X) &=& L(X^k;y)-L(X;y) \\
&=&\frac{1}{2}\nabla_X^2 L(X;y)[X^k-X, X^k-X]+o\left(\|X^k-X\|^2_F\right),
\end{eqnarray*}
where the first inequality is from the assumption of $f(X^k)\leq f(X)$, the first equality is from the feasibility of $X^k$ and $X$, and the second equality is from \eqref{zero}.
Thus, $$ 0\geq \lim\limits_{k\rightarrow \infty} \frac{f(X^k)-f(X)}{\|X^k-X\|^2_F} = \lim\limits_{k\rightarrow \infty} \frac{1}{2}\nabla_X^2 L(X;y)[\Xi^k, \Xi^k] = \frac{1}{2}\nabla_X^2 L(X;y)[\Xi, \Xi].$$
Note that $\nabla_X^2 L(X;y) = \nabla^2 f(X)$ and $\Xi\in
 \mathrm{T}^B_{\mathcal{L}\cap\mathcal{M}(r)}(X) \subseteq \mathrm{T}_\mathcal{L}(X)\cap \mathrm{T}^B_{\mathcal{M}(r)}(X)$. This arrives at a contradiction to \eqref{semidef}. Thus, $X$ is a strictly local minimizer of \ref{P}.
\end{proof}

\begin{remark}
We have constructed the second-order optimality condition, which gives a supplement to the first-order optimality condition. Particularly, in the case of $s=r$, the second-order sufficient condition proves that the $F$-stationary point is a strictly local minimizer restricted on fixed-rank manifold without any convexity assumption on $f$. For super low-rank cases, this allows one to obtain the strictly local minimizer of \ref{P} by handling a small number (at most $r$) of low fixed-rank manifold optimization problems.
\end{remark}

\section{Applications}
Recently there has been a surge of interest in low-rank matrix optimization subject to some problem-specific constraints often characterized as an affine manifold. In this section, two selected applications are considered, for the purpose of the illustration of our proposed optimality conditions for \ref{P}. 

\subsection{Low-rank Hankel matrix approximation}
Hankel low-rank approximation has appeared in data analysis, system identification, model order reduction, low-order controller design and low-complexity modelling, see, e.g.\cite{fazel2002matrix,fazel2013hankel,Ankelhed,grussler2018low-rank,qi2018a} and references therein. Specifically, in low-order automatic control, the rank of a Hankel matrix is crucial since it reflects the order of a linear dynamical system. Finding a low-rank Hankel matrix approximation can be formulated as
\begin{equation}\label{HMA}
  \begin{aligned}
\min\limits_{X\in\mathbb{R}^{m\times n}}&~~\dfrac{1}{2}\|H-X\|_F^2\\
{\rm s.t.}&~~ X\in\mathcal{H}_{m,n},\\
 &~~ \mathrm{rank}(X)\leq r,
  \end{aligned}
\end{equation}
where
\begin{equation}
{\mathcal{H}}_{m,n}:= \left\{  \left[
  \begin{array}{cccc}
  x_1 & x_2&\cdots & x_n \\
    x_2& x_3& \cdots & x_{n+1} \\
     \vdots&  \vdots& \ddots & \vdots \\
      x_{m}& x_{m+1}& \cdots & x_{m+n-1}\\
     \end{array}
 \right]\in{\mathbb{R}}^{m\times n}:x_i\in {\mathbb{R}}, i=1,\ldots, m+n-1\right\}.
\end{equation}
Set $f(X) := \frac{1}{2}\|H-X\|_F^2$ and $l:=(m-1)(n-1)$. For  $i=1,\ldots, l$, set
$$A^i: = e_ke_j^\top -e_{k-1}e_{j+1}^\top, ~k=1,\ldots, m, ~j=1,\ldots, n, ~~\mbox{and}~~b_i = 0.$$
Then problem \eqref{HMA} turns out to be \ref{P}.
For illustration purpose, we consider a simple case where $m=n=3$, $r=2$, and
$$H=\left[
            \begin{array}{ccc}
              112 & 7.5 & 0 \\
              7.5 & 0 & 0 \\
              0 & 0 & 10^{-6} \\
            \end{array}
 \right].$$
In this case, $l=4$, and the matrices $A^{i}$'s are $$A^1=\left[
 \begin{array}{ccc}
      e_2 & -e_1 & 0 \\
     \end{array}
  \right],~~~~~A^2=\left[
   \begin{array}{ccc}
      0 & e_2 & -e_1 \\
     \end{array}
  \right],$$
  $$A^3=\left[
     \begin{array}{ccc}
    e_3 & -e_2 & 0\\
     \end{array}
  \right],
 ~~~~~A^4=\left[
     \begin{array}{ccc}
      0 & e_3 & -e_2 \\
     \end{array}
  \right].$$
Let us consider the feasible solution $\bar{X}$ with the multiplier vector $\bar{y}$ as follows:
\begin{equation}\label{target}
\bar{X} : = \left[
            \begin{array}{ccc}
              112 & 7.5 & 0 \\
              7.5 & 0 & 0 \\
              0 & 0 & 0 \\
            \end{array}
 \right], ~~\bar{y} = 0\in {\mathbb{R}}^4.
 \end{equation} One can easily get SVD of $\bar{X}=\bar{U}\bar{\Sigma}\bar{V}$ with
 \begin{equation}\label{bar-SVD}
 \bar{U}=\bar{V}=\left[
 \begin{array}{ccc}
      -\sqrt{\frac{112.5}{113}}e_1-\sqrt{\frac{0.5}{113}}e_2 &  \sqrt{\frac{0.5}{113}}e_1-\sqrt{\frac{112.5}{113}}e_2 & e_3\\
     \end{array}
  \right],
 \end{equation}
and $$ \bar{\Sigma} = {\rm Diag} (112.5, -0.5, 0).$$
Let $\bar{\Gamma}:= \{1,2\}$.

By utilizing the first-order and the second-order optimality conditions delivered in Section 4, we can obtain the following assertions.
\begin{itemize}
\item[(i)]  $\bar{X}$ is an $F$-stationary point of problem \eqref{HMA} associated with $\bar{y}$;
\item[(ii)] $\bar{X}$ is a strictly local minimizer of \eqref{HMA} restricted on $\mathcal{M}^2$;
\item[(iii)] $\bar{X}$ is a strictly local minimizer of \eqref{HMA}.
\end{itemize}
\vskip 2mm

{\tt For (i):} Direct manipulations yield
\begin{equation} -\nabla_X L(\bar{X};\bar{y}) = 10^{-6}e_3e_3^\top \in \left\{\alpha e_3e_3^\top:\alpha\in{\mathbb{R}}\right\}=\mathrm{N}^{F}_{\mathcal{M}(2)}(\bar{X}). \end{equation}
By Definition \ref{def-stat}, $\bar{X}$ is an $F$-stationary point of problem \eqref{HMA}.
\vskip 2mm

{\tt For (ii):}
When applying the first-order optimality as stated in Theorem \ref{F-min} (ii), together with the assertion in (i) and the convexity of $f$, one can only obtain that $\bar{X}$ is a global minimizer of \eqref{HMA} restricted on $\mathcal{M}_{\bar{X}}(\bar{\Gamma})$. To get the desired assertion in (ii), we need the second-order optimality conditions. Note that for any nonzero matrix $\Xi\in {\mathbb{R}}^{3\times 3}$, one has

    \begin{equation}\label{App1}
   \nabla^2f(\bar{X})[\Xi,\Xi]-2\langle \nabla_X L(\bar{X};\bar{y}),\Xi \bar{X}^{\dag}\Xi \rangle=\|\Xi\|_F^2-\langle 10^{-6}e_3e_3^\top, \Xi \bar{X}^{\dag}\Xi\rangle>0.
   \end{equation}
Thus, utilizing the second-order sufficient condition as stated in Theorem \ref{so-sufficient} (i), we can obtain that $\bar{X}$ is a strictly local minimizer of \eqref{HMA} restricted on $\mathcal{M}^2$.

\vskip 2mm

{\tt For (iii):} With the optimality of $\bar X$ in (ii) just proved, combining with the fact ${\mathcal{M}}(2)= {\mathcal{M}}^2\cup {\mathcal{M}}(1)$, it suffices to show that for any global minimizer, namely $\tilde{X}$, of the following problem
\begin{equation}\label{M1}
\min_{X\in {\mathbb{R}}^{3\times 3}} \{f(X): X\in {\mathcal{F}}_1:= {\mathcal{H}}_{3,3}\cap {\mathcal{M}}(1)\},
\end{equation}
one has $f(\bar X)<f(\tilde{X})$.  Observe that
$${\mathcal{F}}_1 = \left\{t_1e_1e_1^\top:t_1\in {\mathbb{R}}\right\} \cup \left\{t_2ee^\top:t_2\in {\mathbb{R}}\right\} \cup \left\{t_3e_3e_3^\top:t_3\in {\mathbb{R}}\right\},$$
where $e$ is the all-one vector. Direct manipulations yield that $\tilde{X} = 112 e_1e_1^\top$. Obviously,
$f(\bar X)<f(\tilde{X})$. Thus, $\bar{X}$ is a strictly local minimizer of \eqref{HMA}.

\vskip 2mm

There are two additional things which are noteworthy.
\begin{itemize}
\item By the optimality addressed in (iii), one can conversely verify that (i) holds by applying the first-order optimality condition presented in Theorem \ref{F-min} (i), since the required Assumption \ref{assumption 1} holds at $\bar{X}$. Specifically, one can check that $$T_{\bar{X}}^1=\left[
 \begin{array}{ccc}
      0 & -1 & 0 \\
        1 & 0& 0 \\
          0 & 0& 0 \\
     \end{array}
  \right],~~~~~T_{\bar{X}}^2=\left[
   \begin{array}{ccc}
      b^2 & ab & a \\
          ab & a^2 & -b \\
             0& 0 & 0 \\
     \end{array}
  \right],$$
  $$T_{\bar{X}}^3=\left[
     \begin{array}{ccc}
   -b^2 & -ab & 0 \\
          -ab & -a^2 & 0 \\
             -a& b & 0 \\
     \end{array}
  \right],
 ~~~~~T_{\bar{X}}^4=\left[
     \begin{array}{ccc}
      0 & 0 & b \\
          0 & 0 & a \\
             -b& a & 0 \\
     \end{array}
  \right]$$
 with $a=\sqrt{\dfrac{112.5}{113}}$ and $b=\sqrt{\dfrac{0.5}{113}}$,  are linearly independent.
\item Note that $\tilde{X}$ is also the unique global minimizer of problem \eqref{M1}. However, $\tilde{X}$ is not an $F$-stationary of \eqref{M1}, since for any $y\in\mathbb{R}^4$,
\begin{eqnarray}
 -\nabla_X L(\tilde{X}; y) &=& \left[
      \begin{array}{ccc}
   0 & 7.5+y_1 & y_2  \\
   7.5-y_1 & y_3-y_2 & y_4\\
    -y_3 & -y_4 & 10^{-6} \\
     \end{array}
   \right]\nonumber\\
   &\notin & \left\{ \left[
      \begin{array}{ccc}
   0 &  0& 0 \\
   0 &  a_1& a_2 \\
   0 &  a_3 & a_4  \\
     \end{array}
   \right]:a_i\in {\mathbb{R}}, i=1,\ldots,4\right\}\nonumber\\
   &=&\mathrm{N}^{F}_{\mathcal{M}(1)}(\tilde{X}).
   \end{eqnarray}
The reason for this is the failure of Assumption \ref{assumption 1} at $\tilde{X}$, as one can see that $T_{\tilde X}^4=O$. It indicates that the Assumption \ref{assumption 1} in the first-order optimality in Theorem \ref{F-min} (i) cannot be removed in general.

\end{itemize}

\subsection{Low-Rank representation over the manifold}
Low-rank representation (LRR) has recently attracted considerable interest as its pleasing efficacy in exploring low-dimensional subspace structures embedded in data, which is very helpful for data clustering. However, in many computer vision applications, data often originate from a manifold, which is equipped with some Riemannian geometry, and the low-rank representation over the manifold \cite{Yin2015Nonlinear, YFu2105, Wang2015Kernelized} is required. This problem can be formulated as
\begin{equation}\label{cLRR}
  \begin{aligned}
\min\limits_{W\in\mathbb{R}^{N\times N}}&~~\dfrac{1}{2}\sum_{i=1}^N w_iB^iw_i^\top\\
{\rm s.t.}&~~ \sum_{j=1}^N W_{ij}=1, ~i=1,\ldots, N,\\
 &~~ \mathrm{rank}(W)\leq r,
  \end{aligned}
\end{equation}
where $B^i\in\mathbb{R}^{N\times N}$, $w_i$ is the $i$-th row of matrix $W\in\mathbb{R}^{N\times N}$. Set $$f(W):= \dfrac{1}{2}\sum_{i=1}^N w_iB^iw_i^\top, ~\mbox{ and~} A^i: = E^i\in\mathbb{R}^{N\times N}, ~i=1,\ldots, N$$ with $E^i$ the matrix having all components in the $i$-th row $1$  and others $0$. Thus, problem \eqref{cLRR} is a special case of \ref{P}. The Lagrangian function of \eqref{cLRR} is given by
$$L(W;y)=\dfrac{1}{2}\sum_{i=1}^N w_iB^iw_i^\top+\sum_{i=1}^N y_i[\langle E^i, W\rangle -1]$$
where $y=(y_1,\ldots,y_N)^\top $ is the Lagrangian multiplier vector corresponding to the equality constraint.
One has
$$\nabla_WL(W;y)=\left[
     \begin{array}{ccc}
      w_1B^1 \\
      \vdots\\
     w_NB^N \\
     \end{array}
  \right]+ye^\top.$$
For illustration, we simply take $B^i=I_N$ for $i=1,\ldots, N$ and $r>1$ as an example. Clearly, the objective function is convex, and the gradient
$\nabla_WL(W;y)=W+y e^\top.$
Let us consider the feasible solution $\overline{W}$ with the multiplier vector $\bar{y}$ as follows:
 $$\overline{W}=\dfrac{1}{N}ee^\top, ~~\bar{y}=-\dfrac{1}{N}e\in {\mathbb{R}}^N.$$
One can prove that $\overline{W}$ is a global minimizer of \eqref{cLRR} by applying the first-order and the second-order optimality conditions as proposed in Section 4. As a start, one can easily obtain that $\overline{W}$ is an $F$-stationary point of problem due to the fact $$\nabla_WL(\overline{W};\bar{y})=O.$$
Since $\text{rank}(\overline{W})=1<r$, it then follows from the first-order optimality condition in Theorem \ref{F-min} (ii) that $\overline{W}$ is a global minimizer of \eqref{cLRR}. Furthermore, one can  use the second-order sufficient condition in Theorem \ref{so-sufficient} (ii) to show that $\overline{W}$ is also the unique global minimizer, since
$$\nabla^2f(\overline{W})[\Xi,\Xi]=\|\Xi\|_F^2>0, ~~\forall \Xi\neq O.$$ 

Conversely, let the SVD of $\overline{W}$ be $\overline{W} = U \Sigma V^\top$ with $\Sigma = {\rm Diag}(1, 0,\ldots, 0)$, $U=V$ whose first column is $\dfrac{1}{\sqrt{N}}e$. Let $\Gamma =\{1\}$. Direct calculations yield
$$R^i_{\overline{W}} = \dfrac{1}{\sqrt{N}} U^\top e_i, ~i = 1,\ldots, N.$$
Thus, Assumption \ref{assumption 2} holds at $\overline{W}$. In this case, under the global optimality of $\overline{W}$ to problem \eqref{cLRR}, the first-order optimality condition as discussed in Theorem \ref{F-min} (i) yields that $\overline{W}$ is an $F$-stationary point of  problem \eqref{cLRR}.

\section{Conclusions}
The nonlinear matrix optimization constrained by the low-rank matrix set intersecting with an affine manifold, termed as \ref{P}, has been studied in this paper, emphasizing on the first-order and the second-order optimality conditions.
We have explored the intersection rule of Fr{\'e}chet normal cone to the underlying feasible set relying on two linear independence assumptions for the cases of $s<r$ and $s=r$, respectively. This further has allowed us to derive the first-order necessary and sufficient optimality conditions for the \ref{P} via the $F$- and the $\alpha$-stationarity. Moreover, the second-order necessary and sufficient optimality condition are also presented based on the Bouligand tangent cone. To illustrate the results of these optimality conditions, two specific applications of \ref{P} are discussed. To the best of our knowledge, this paper is the first one to touch the optimality conditions for the original low-rank optimization problem \ref{P}.

It is worth mentioning the $\alpha$-stationary point, defined via the tractable low-rank matrix projection, might provide clues for algorithm design. Existing related work can be found in \cite[Theorem 3.4]{Jain2014} for \ref{P} with vacant $\mathcal{L}$, where the projected gradient descent algorithm is designed with the iteration scheme
$$ X_{k+1} \in\Pi_{\mathcal{M}(r)}(X_k -\alpha_k\nabla f(X_k)).$$
For the general case with the affine manifold in \ref{P}, a possible way for algorithm design would be working with the nonlinear system \eqref{alpha-stationary-def} in the definition of the $\alpha$-stationary point. One can also find the vector counterpart in sparse optimization in \cite{Zhao2021}, where the Lagrange-Newton algorithm was proposed and showed to possess quadratic convergence. For \ref{P}, and even for more general cases including additional nonlinear equality and inequality constraints, how to design efficient methods based on the stationarity deserves further investigation.

\bibliographystyle{plain}
\bibliography{sdp}

\end{document}